\theoremstyle{plain}
\newtheorem{theorem}{Theorem}[section]
\newtheorem{lemma}[theorem]{Lemma}
\newtheorem{corollary}[theorem]{Corollary}
\newtheorem*{MT}{Main Theorem}
\author{Mauricio Islas-Gómez\\ email: \texttt{is294777@uaeh.edu.mx}\\ Rafael Villarroel-Flores\({}^{ * }\)\\ email: \texttt{rafaelv@uaeh.edu.mx}\\[12pt] Universidad Autónoma del Estado de Hidalgo\\ Carretera Pachuca-Tulancingo km. 4.5\\ Pachuca 42184 Hgo. México\\[12pt]  MSC: 05C76, 05E45\\ keywords: iterated clique graphs, topological combinatorics\\[12pt] \({}^{ * }\)Corresponding Author. Partially\\ supported by CONACYT, grant A1-S-45528.}
\date{\today}
\title{On the homotopy type of the iterated clique graphs of low degree}
\begin{document}

\maketitle
\begin{abstract}
To any simple graph \(G\), the clique graph operator \(K\) assigns the graph \(K(G)\) which is the intersection graph of the maximal complete subgraphs of \(G\). The iterated clique graphs are defined by \(K^{0}(G)=G\) and \(K^{n}(G)=K(K^{n-1}(G))\) for \(n\geq 1\). We associate topological concepts to graphs by means of the simplicial complex \(\mathrm{Cl}(G)\) of complete subgraphs of \(G\). Hence we say that the graphs \(G_{1}\) and \(G_{2}\) are homotopic whenever \(\mathrm{Cl}(G_{1})\) and \(\mathrm{Cl}(G_{2})\) are. A graph \(G\) such that \(K^{n}(G)\simeq G\) for all \(n\geq1\) is called \emph{\(K\)-homotopy permanent}. A graph is \emph{Helly} if the collection of maximal complete subgraphs of \(G\) has the Helly property. Let \(G\) be a Helly graph. Escalante (1973) proved that \(K(G)\) is Helly, and Prisner (1992) proved that \(G\simeq K(G)\), and so Helly graphs are \(K\)-homotopy permanent. We conjecture that if a graph \(G\) satisfies that \(K^{m}(G)\) is Helly for some \(m\geq1\), then \(G\) is \(K\)-homotopy permanent. If a connected graph has maximum degree at most four and is different from the octahedral graph, we say that it is a \emph{low degree graph}. 
It was recently proven that all low degree graphs \(G\) satisfy that \(K^{2}(G)\) is Helly. In this paper, we show that all low degree graphs have the homotopy type of a wedge or circumferences, and that they are \(K\)-homotopy permanent.
\end{abstract}

\section{Introduction}
\label{sec:orgde4c4da}

All graphs in this paper are finite and simple. Following \cite{MR0256911-djvu}, a \emph{clique} in a graph \(G\) is a maximal and complete subgraph of \(G\). The \emph{clique graph} of \(G\), denoted by \(K(G)\), is the intersection graph of its cliques. We define the sequence of \emph{iterated clique graphs} by \(K^{0}(G)=G\) and \(K^{n}(G)=K(K^{n-1}(G))\) for \(n\geq 1\). We usually identify a set of vertices of \(G\) with the subgraph of \(G\) that induces. In this way, if \(x\in G\) then \(G-x\) denotes the subgraph of \(G\) induced by its vertices different from \(x\). For \(x\in G\), we denote with \(N(x)\) the set of neighbors of \(x\) in \(G\), and with \(N[x]\) the set \(N(x)\cup\{x\}\). The maximum degree of a vertex in \(G\) is denoted by \(\Delta(G)\) and the cardinality of a set \(X\) by \(|X|\).

Denote by \(\mathrm{Cl}(G)\) the simplicial complex where the simplices are the complete subgraphs of \(G\).  In this way, we may assign topological concepts to graphs, and we say that the graphs \(G_{1}\), \(G_{2}\) are \emph{homotopic} (denoted as \(G_{1}\simeq G_{2}\)), whenever \(\mathrm{Cl}(G_{1})\), \(\mathrm{Cl}(G_{2})\) are homotopy equivalent. As in \cite{MR3129782}, we will say that a graph \(G\) is \emph{homotopy \(K\)-invariant} if \(G\simeq K(G)\), and that the graph \(G\) is homotopy \emph{\(K\)-permanent} if \(G\simeq K^{n}(G)\) for all \(n\geq1\).

The graph of the octahedron \(O_{3}\) can be defined as the complement of the disjoint union of 3 edges. This is a \(4\)-regular graph. If a connected graph \(G\) has \(\Delta(G)\leq 4\) and \(G\) is not the octahedron, we will say that the graph \(G\) is a \emph{low degree graph}.

The purpose of this paper is to prove the following:

\begin{MT}
If \(G\) is a low degree graph, then:
\begin{itemize}
\item \(G\) is homotopy equivalent to a wedge of circumferences.
\item \(G\) is homotopy \(K\)-permanent.
\end{itemize}
\end{MT}

A collection of sets is \emph{intersecting} if any two members of the collection has nonempty intersection. A collection of sets has the \emph{Helly property} if any intersecting subcollection has nonempty intersection. A graph \(G\) is called \emph{Helly} if the collection of its cliques have the Helly property. The consideration of homotopy \(K\)-invariant graphs started in \cite{Pri92}, where it is proven that Helly graphs are homotopy \(K\)-invariant. Prisner's result was generalized in \cite{LARRION2001}, and then generalized further in \cite{LPV08a}. Since it was already known from \cite{MR0329947} that \(K(G)\) is Helly if \(G\) is Helly, then one actually obtains that Helly graphs are homotopy \(K\)-permanent.

A vertex \(x\in G\) is \emph{dominated} by \(y\in G\) if \(N[x]\subseteq N[y]\). The graph \(G\) is \emph{dismantlable} if it is the graph of one vertex, or if there is \(x\in G\) dominated by \(y\ne x\) such that \(G-x\) is dismantlable. Removing dominated vertices does not alter the homotopy type of a graph (\cite{Pri92}), and so dismantlable graphs are contractible. If \(G\) is dismantlable, then \(K(G)\) is dismantlable (\cite{BP91}), hence dismantlable graphs are homotopy \(K\)-permanent.

We say that the graph \(G\) is \emph{convergent} if the sequence of iterated clique graphs has, up to isomorphism, a finite number of graphs. If \(G\) is not convergent we say that \(G\) is \emph{divergent}. The graph of the octahedron was the first known example of a divergent graph, given by Neumann-Lara in \cite{zbMATH03641500}. In fact, for each \(n\geq 1\), we have that the octahedral graph \(O_{n}\) can be defined as the complement of the disjoint union of \(n\) edges, that is, \(O_{n}=\overline{nK_{2}}\). The result of Neumann-Lara is that \(K(O_{n})=O_{2^{n-1}}\). Thus, \(O_{n}\) is divergent for \(n\geq 3\). Since \(\mathrm{Cl}(O_{n})\) is homeomorphic to the sphere \(S^{n-1}\), we have that \(O_{3}\) is not homotopy \(K\)-invariant and our Main Theorem will show that \(O_{3}\) is the only connected graph \(G\) with \(\Delta(G)\leq 4\) that is not homotopy \(K\)-invariant. On the other hand, all Helly graphs (\cite{MR0329947}) and all dismantlable graphs (\cite{Pri92}) are convergent. There are also examples of divergent graphs that are homotopy \(K\)-permanent, see \cite{equivariantc}.

It could happen that a non-Helly graph \(G\) satisfies that \(K^{m}(G)\) is Helly for some \(m\geq 1\). All such graphs seem to be also homotopy \(K\)-permanent. In fact, the problem about the contractibility of graphs such that \(K^{m}(G)\) is a point for some \(m\geq 1\) was considered in \cite{contractibility}, where it was proven that if \(K(G)\) has a universal vertex (which includes all graphs \(G\) where \(K^{2}(G)\) is a point) then \(G\) is contractible.

In this paper we study the homotopy type of the iterated clique graphs of low degree graphs. The family of low degree graphs includes an infinite number of graphs for which both \(G\) and \(K(G)\) are neither Helly nor dismantlable. However, it was recently proved (\cite{2022-on-the-clique-behavior-of-graphs-of-low-degree}) that \(K^{2}(G)\) is always Helly for a low degree graph \(G\), and so the known results imply that \(K^{n}(G)\simeq K^{2}(G)\) for all \(n\geq 2\). Here, we show that \(G\simeq K(G)\simeq K^{2}(G)\), so any low degree graph \(G\) is actually homotopy \(K\)-permanent.

Given a graph \(G\), let \(\mathcal{C}\) be the collection of subcomplexes of \(\mathrm{Cl}(G)\) indexed by the cliques of \(G\), where the subcomplex corresponding to \(q\in K(G)\) consists of the complete subgraphs contained in \(q\). This cover satisfies the hypothesis of the Nerve Theorem (Theorem 10.6 from \cite{1995-bjorner-topological-methods}), and so, following the notation of \cite{2016-adamaszek-adams-frick-peterson-previte-johnson-nerve-complexes-of-circular-arcs}, we have that \(\mathcal{N}(\mathcal{C})\simeq \mathrm{Cl}(G)\) (In the notation of \cite{2012-barmak-minian-strong-homotopy-types-nerves-and-collapses}, we have \(\mathcal{N}(\mathcal{C})=\mathcal{N}(\mathrm{Cl}(G))\)). On the other hand, \(\overline{\mathcal{N}}(\mathcal{C})\) is \(\mathrm{Cl}(K(G))\). We have that \(\mathcal{N}(\mathcal{C})=\overline{\mathcal{N}}(\mathcal{C})\) precisely when \(G\) is Helly, and this is another way of looking at Prisner's result. Therefore, our result can be interpreted as giving a sufficient condition, which applies even when \(G\) is not Helly, under which the complex \(\mathcal{N}(\mathcal{C})\) (the Čech complex of the cover) is homotopy equivalent to the complex \(\overline{\mathcal{N}}(\mathcal{C})\) (the Vietoris-Rips complex of the cover).

\section{Preliminaries}
\label{sec:orgb3bae06}

Given \(x\in G\), we define the \emph{star} of \(x\), denoted by \(x^{*}\), as the set of cliques of \(G\) that contain \(x\), that is, \(x^{ * }=\{q\in K(G)\mid x\in q\}\). This is a complete subgraph of \(K(G)\). If \(x^{ *} \in K^{2}(G)\), we say that \(x\) is a \emph{normal vertex}. If \(Q\in K^{2}(G)\) is such that \(\cap Q=\emptyset\), we say that \(Q\) is a \emph{necktie} of \(G\). Thus, vertices of \(K^{2}(G)\) are partitioned in stars and neckties of \(G\), and \(G\) is Helly precisely when \(G\) has no neckties.
We say that a triangle \(T\) in the low degree graph \(G\) is \emph{internal} if it is a clique, and every edge of \(T\) is contained in a clique different from \(T\). Given an inner triangle \(T\) in \(G\), we define \(Q_{T}=\{q\in K(G)\mid |q\cap T|\geq 2\}\). This is always a complete in \(K(G)\).

We collect the following facts about low degree graphs from \cite{2022-on-the-clique-behavior-of-graphs-of-low-degree}:

\begin{theorem}
\label{facts-from-previous-paper}
Let \(G\) be a low degree graph. Then:
\begin{enumerate}
\item \label{facts-necktie-is-centered} If \(Q\) is a necktie in \(G\), then \(Q\) consists only of triangles and there is an internal triangle \(T\) such that \(Q=Q_{T}=\{T,T_{1},T_{2},T_{3}\}\), where \(\{T\cap T_{i}\mid i\in\{1,2,3\}\}\) are the three edges of \(T\). Conversely, if \(T\) is an internal triangle, then \(Q_{T}\) is a necktie.
\item \label{facts-intersecting-neckties} If \(T_{1}\) and \(T_{2}\) are internal triangles such that \(Q_{T_{1}}\sim Q_{T_{2}}\) in \(K^{2}(G)\), then \(T_{1}\cap T_{2}\ne\emptyset\). If \(|T_{1}\cap T_{2}|=1\), then the graph of Figure \ref{intersection-in-one} is a subgraph of \(G\).  If \(|T_{1}\cap T_{2}|=2\), then the graph of Figure \ref{intersection-in-two} is a subgraph of \(G\).
\item \label{second-iterated-is-helly} \(K^{2}(G)\) is Helly.
\end{enumerate}
\end{theorem}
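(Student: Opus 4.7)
The plan is to prove the three claims in order, using the rigidity imposed by \(\Delta(G)\leq 4\) throughout, with the octahedron excluded to rule out a specific degenerate configuration.

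For part \ref{facts-necktie-is-centered}, given a necktie \(Q\) I would first locate three cliques \(q_1,q_2,q_3\in Q\) already exhibiting the Helly failure, producing a triangle \(T=\{x,y,z\}\) from distinct vertices in the pairwise intersections \(q_i\cap q_j\). The low-degree hypothesis then forces every member of \(Q\) to share an edge with \(T\): any clique of \(Q\) meeting \(T\) in only one vertex would, together with the three \(q_i\), overfill the \(\leq 4\) neighbors of that vertex. A short enumeration shows \(Q\) consists exactly of \(T\) together with three cliques extending its edges, so \(T\) is internal and \(Q=Q_T\). The converse follows by direct verification: for an internal triangle \(T\), each of its edges produces a witness clique different from \(T\), these four cliques pairwise intersect in edges or vertices of \(T\), and their common intersection is empty because no vertex is adjacent to all of \(T\) outside \(T\) itself.

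For part \ref{facts-intersecting-neckties}, the adjacency \(Q_{T_1}\sim Q_{T_2}\) in \(K^2(G)\) produces a clique \(q\) of \(G\) with \(|q\cap T_i|\geq 2\) for \(i=1,2\). If \(T_1\cap T_2=\emptyset\), then \(q\) contains at least four vertices from \(T_1\cup T_2\), and combining this with the remaining cliques of \(Q_{T_1}\) and \(Q_{T_2}\) forces either a vertex of degree exceeding \(4\) or exactly the octahedron, both ruled out. The subcases \(|T_1\cap T_2|=1\) and \(|T_1\cap T_2|=2\) are then extracted by tracking how the edges of \(T_1\) and \(T_2\) extend into shared or disjoint cliques, yielding the two advertised subgraphs.

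The main obstacle is part \ref{second-iterated-is-helly}. A direct reapplication of \ref{facts-necktie-is-centered} to \(K(G)\) is not available, since \(K(G)\) need not be low-degree. I would argue by contradiction: a hypothetical necktie in \(K^2(G)\) is, by \ref{facts-necktie-is-centered}, a family whose members are stars \(x^*\) or neckties \(Q_T\) of \(G\), pairwise intersecting as cliques of \(K(G)\) but with no common element. Using \ref{facts-intersecting-neckties} to restrict how two neckties of \(G\) can be adjacent in \(K^2(G)\), together with the corresponding descriptions of star–star and star–necktie adjacencies, I would reduce the analysis to a finite list of local pictures in \(G\) and check, case by case, that each already admits a clique common to the entire alleged necktie, contradicting the assumption. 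This case analysis is the most delicate and lengthy step and would occupy the bulk of the proof.
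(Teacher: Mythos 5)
The first thing to note is that the paper does not prove this statement at all: Theorem \ref{facts-from-previous-paper} is imported verbatim from \cite{2022-on-the-clique-behavior-of-graphs-of-low-degree} (``We collect the following facts about low degree graphs from\ldots''), so there is no in-paper proof to compare against, and your proposal is really an attempt to reprove the main results of that earlier paper. Your sketches for parts \ref{facts-necktie-is-centered} and \ref{facts-intersecting-neckties} are in a plausible direction (degree counting around a vertex of the central triangle), though thin in places: since \(\Delta(G)\leq 4\) allows cliques of up to five vertices, a minimal subfamily of a necktie \(Q\) with empty intersection need not consist of only three cliques, so ``locate three cliques already exhibiting the Helly failure'' is itself a step requiring proof, not a free move.

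The genuine flaw is in part \ref{second-iterated-is-helly}, and it is a level confusion that makes the plan unworkable. The object you propose to analyze --- ``a family whose members are stars \(x^{*}\) or neckties \(Q_{T}\) of \(G\), pairwise intersecting as cliques of \(K(G)\) but with no common element'' --- is an intersecting family of \emph{cliques of \(K(G)\)} with empty intersection, i.e., a witness that \(K(G)\) is not Helly. Showing that every such family ``already admits a clique common to the entire alleged necktie'' would prove \(K(G)\) is Helly, which is false: the introduction of this very paper states that infinitely many low degree graphs have \(K(G)\) neither Helly nor dismantlable, so your case analysis, if it ``succeeded,'' would contradict known facts, and in reality it must run into irreparable counterexamples. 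Helly-ness of \(K^{2}(G)\) concerns intersecting families of cliques of \emph{\(K^{2}(G)\)}, whose members are \emph{sets} of stars and neckties (e.g., sets of the form \(x^{*}\cap Q_{T}\) figure in the analysis), one level above the objects you consider; relatedly, your appeal to item \ref{facts-necktie-is-centered} here is misplaced, since that item classifies neckties of \(G\), not of \(K(G)\) or \(K^{2}(G)\) --- a gap you yourself flag (``\(K(G)\) need not be low-degree'') but then do not actually avoid. A workable route, and the natural one given parts \ref{facts-necktie-is-centered} and \ref{facts-intersecting-neckties}, is a local criterion such as Szwarcfiter's characterization of clique-Helly graphs: verify that for every triangle of \(K^{2}(G)\) --- a triple of pairwise adjacent stars/neckties, whose possible configurations in \(G\) are tightly constrained by the two structural items --- the extended triangle has a universal vertex. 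That is a different, and correct, target for the case analysis you envision.
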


For the internal triangle \(T\), we say that \(T\) is the \emph{center} of the necktie \(Q_{T}\). Any other triangle in \(Q_{T}\) is called an \emph{ear} of \(Q_{T}\).

\begin{figure}
\centering
\begin{tikzpicture}
    [vertex/.style={circle, fill, draw, inner sep=0pt, minimum size=4pt},
    edge/.style={thin},
    edashed/.style={dashed, thin}]
    \newcommand{\nodecab}[3]{\node at (#1,#2) [vertex] (#3) {};\node at (#3) [above] {\(#3\)};}
    \newcommand{\nodepab}[3]{\node at (#1:#2) [vertex] (#3) {};\node at (#3) [above] {\(#3\)};}
    \newcommand{\nodepbe}[3]{\node at (#1:#2) [vertex] (#3) {};\node at (#3) [below] {\(#3\)};}
    \nodecab{0}{0}{x}
    \nodecab{{sqrt(3)}}{0}{f}
    \nodecab{{-sqrt(3)}}{0}{e}
    \nodepab{45}{1}{c}
    \nodepab{135}{1}{a}
    \nodepbe{-135}{1}{b}
    \nodepbe{-45}{1}{d}
    \node at ({sqrt(2)/3},0) {\(T_{2}\)};
    \node at ({-sqrt(2)/3},0) {\(T_{1}\)};
    \draw[edge] (x) -- (a) -- (e) -- (b) -- (x) -- (c) -- (f) -- (d) -- (x);
    \draw[edge] (a) -- (c) -- (d) -- (b) -- (a);
\end{tikzpicture}
\caption{Case \(|T_{1}\cap T_{2}|=1\) \label{intersection-in-one}}
\end{figure}

\begin{figure}
\centering
\begin{tikzpicture}
  [vertex/.style={circle, fill, draw, inner sep=0pt, minimum size=4pt},
  edge/.style={thin},
  edashed/.style={dashed, thin}]
  \newcommand{\nodecab}[3]{\node at (#1,#2) [vertex] (#3) {};\node at (#3) [above] {\(#3\)};}
  \newcommand{\nodepab}[3]{\node at (#1:#2) [vertex] (#3) {};\node at (#3) [above] {\(#3\)};}
  \newcommand{\nodepbe}[3]{\node at (#1:#2) [vertex] (#3) {};\node at (#3) [below] {\(#3\)};}
  \nodecab{0}{0}{x}
  \nodepab{0}{1}{y}
  \nodepab{0}{2}{y'}
  \nodepab{180}{1}{x'}
  \nodepab{60}{1}{u}
  \nodepbe{-60}{1}{u'}
  \node at (30:{sqrt(3)/3}) {\(T_{1}\)};
  \node at (-30:{sqrt(3)/3}) {\(T_{2}\)};
  \draw[edge] (u) -- (x') -- (u') -- (y') -- (u) -- (x) -- (u') -- (y) -- (u);
  \draw[edge] (x') -- (x) -- (y) -- (y');
\end{tikzpicture}
\caption{Case \(|T_{1}\cap T_{2}|=2\) \label{intersection-in-two}}
\end{figure}

\begin{theorem}
\label{prisner-removing-dominated}
(Proposition 3.2 from \cite{Pri92}) If \(x\in G\) is dominated by a vertex \(y\ne x\), then \(G-x\simeq G\).
\end{theorem}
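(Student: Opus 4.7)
The plan is to produce a homotopy equivalence $\mathrm{Cl}(G-x)\simeq\mathrm{Cl}(G)$ by exhibiting $\mathrm{Cl}(G-x)$ as a deformation retract of $\mathrm{Cl}(G)$, morally by ``sliding'' every simplex that uses $x$ to the corresponding simplex with $y$ substituted for $x$.

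First I would record the consequences of $N[x]\subseteq N[y]$. Since $x\in N[x]\subseteq N[y]$ and $x\ne y$, the vertex $y$ is a neighbor of $x$; moreover, for any $z\in N(x)\setminus\{y\}$, the inclusion $z\in N[y]$ together with $z\ne y$ forces $z$ to be a neighbor of $y$ as well. Consequently every complete subgraph $\sigma\subseteq N(x)$ remains complete after adjoining $y$.

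The key structural observation is that the link of $x$ in $\mathrm{Cl}(G)$, which equals $\mathrm{Cl}(N(x))$, is a cone with apex $y$, and is therefore contractible. The theorem then reduces to the general fact that if the link of a vertex in a simplicial complex $K$ is contractible, then $K$ deformation retracts onto the full subcomplex spanned by the remaining vertices. One route is the decomposition $\mathrm{Cl}(G)=\mathrm{Cl}(G-x)\cup\mathrm{st}(x)$ with intersection $\mathrm{lk}(x)$: the closed star is always contractible (a cone with apex $x$), so when the link is also contractible the inclusion $\mathrm{lk}(x)\hookrightarrow\mathrm{st}(x)$ is a cofibration and a homotopy equivalence, and hence the pushout is homotopy equivalent to $\mathrm{Cl}(G-x)$. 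An equivalent combinatorial route is the discrete-Morse matching that pairs each simplex $\sigma\ni x$ with $\sigma\triangle\{y\}$; this matching is acyclic and its critical cells are exactly the simplices of $\mathrm{Cl}(G-x)$, giving an explicit collapse.

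The main obstacle is really just the one nontrivial verification: that the link of $x$ is a cone, which rests entirely on the elementary observation $N(x)\setminus\{y\}\subseteq N(y)$. Once this is in hand, the remainder of the argument is a standard application of contractibility of links (or an acyclic matching), and the conclusion $G-x\simeq G$ follows immediately from the definition of $\simeq$ via $\mathrm{Cl}$.
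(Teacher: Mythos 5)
Your argument is correct, but note that the paper offers no proof of this statement at all: it is quoted as Proposition 3.2 of Prisner \cite{Pri92}, so the comparison must be with the standard argument from the literature rather than with anything in this text. The classical proof is a retraction (fold) argument: the vertex map fixing everything and sending \(x\mapsto y\) induces a simplicial map \(f\colon\mathrm{Cl}(G)\to\mathrm{Cl}(G-x)\), since \(N[x]\subseteq N[y]\) guarantees that \((\sigma\setminus\{x\})\cup\{y\}\) is complete whenever \(\sigma\) is; moreover \(\iota\circ f\) is contiguous to the identity because \(\sigma\cup f(\sigma)\subseteq\sigma\cup\{y\}\) is a simplex for every simplex \(\sigma\) --- exactly the elementary verification you single out as the crux --- and contiguous maps are homotopic, so \(\mathrm{Cl}(G-x)\) is a deformation retract of \(\mathrm{Cl}(G)\). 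Your route packages the same observation differently: you isolate the fact that \(\mathrm{lk}(x)=\mathrm{Cl}(N(x))\) is a cone with apex \(y\), then invoke the general principle that a vertex with contractible link may be deleted up to homotopy, either via the gluing lemma applied to \(\mathrm{Cl}(G)=\mathrm{Cl}(G-x)\cup\overline{\mathrm{st}}(x)\) with intersection \(\mathrm{lk}(x)\), or via your matching \(\sigma\mapsto\sigma\triangle\{y\}\) on the simplices containing \(x\), which is indeed well defined (since \(x\neq y\) and \(\sigma\cup\{y\}\) is always a simplex) and acyclic for trivial reasons: after any up-move the cell contains \(y\), so no alternating path of length greater than one exists. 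Both proofs are valid; the contiguity argument is shorter and purely combinatorial, while yours proves slightly more, exhibiting domination as a special case of the more general ``contractible link'' reduction --- and since the link here is actually a cone, you in fact obtain a strong collapse in the sense of Barmak and Minian, not merely a homotopy equivalence. All your supporting verifications (\(y\in N(x)\), \(N(x)\setminus\{y\}\subseteq N(y)\), criticality of exactly the simplices of \(\mathrm{Cl}(G-x)\)) are correct.
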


Given an edge \(e=\{x,y\}\) of \(G\), let \(N[e]=N[x]\cap N[y]\). The following result allows us to remove edges from a graph without altering its homotopy type:

\begin{theorem}
\label{removing-edge}
(Proposition 2.3 from \cite{equivariantc}, see also Lemma 1.6 from \cite{2010-boulet-fieux-jouve-simplicial-simple-homotopy-of-flag-complexes-in-terms-of-graphs}) Let \(G\) be a graph such that \(e\) is an edge properly contained in \(N[e]\), and \(N[e]\) is complete. Then \(G\simeq G-e\).
\end{theorem}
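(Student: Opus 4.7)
The plan is to prove the result at the level of the clique complex, exhibiting $\mathrm{Cl}(G-e)$ as a deformation retract of $\mathrm{Cl}(G)$. Write $e=\{x,y\}$, let $Z=N[e]=N[x]\cap N[y]$, and let $A=Z\setminus\{x,y\}$, which is nonempty by the proper containment hypothesis. Since $Z$ is complete in $G$, it spans a full simplex $\Delta\subseteq\mathrm{Cl}(G)$.

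The first step is to realize $\mathrm{Cl}(G)$ as the union $\mathrm{Cl}(G-e)\cup\Delta$. Indeed, any simplex $\sigma$ of $\mathrm{Cl}(G)$ that fails to be a simplex of $\mathrm{Cl}(G-e)$ must contain both $x$ and $y$; every other vertex of $\sigma$ is then a common neighbor of $x$ and $y$, so $\sigma\subseteq Z$ and hence $\sigma\in\Delta$. The intersection $\Delta\cap\mathrm{Cl}(G-e)$ consists of the subsets of $Z$ missing at least one of $x$ or $y$, which is the union of two codimension-one faces of $\Delta$: the full simplex on $Z\setminus\{y\}$ and the full simplex on $Z\setminus\{x\}$.

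Both of these faces are simplices, hence contractible, and they meet along the full simplex on $A$, which is contractible precisely because $A\neq\emptyset$. The standard gluing lemma for contractible subcomplexes then gives that $\Delta\cap\mathrm{Cl}(G-e)$ is contractible. The ambient simplex $\Delta$ is also contractible, so the inclusion $\Delta\cap\mathrm{Cl}(G-e)\hookrightarrow\Delta$ is a homotopy equivalence of CW complexes and a cofibration, and therefore admits a strong deformation retraction of $\Delta$ onto the intersection. Extending by the identity on $\mathrm{Cl}(G-e)$ yields a strong deformation retraction of $\mathrm{Cl}(G)=\mathrm{Cl}(G-e)\cup\Delta$ onto $\mathrm{Cl}(G-e)$, which gives $G\simeq G-e$.

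There is no serious obstacle; the argument reduces to a routine application of the gluing lemma for CW pairs. The one subtlety worth flagging is that the hypothesis that $e$ is properly contained in $N[e]$ is used exactly once, to guarantee $A\neq\emptyset$; without it, the intersection $\Delta\cap\mathrm{Cl}(G-e)$ would be the disconnected two-point complex $\{x\}\sqcup\{y\}$ rather than a contractible space, and the gluing step would fail.
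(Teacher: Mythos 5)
Your proof is correct. Note that the paper itself does not prove this statement at all: it is imported from the literature (Proposition 2.3 of \cite{equivariantc}, Lemma 1.6 of \cite{2010-boulet-fieux-jouve-simplicial-simple-homotopy-of-flag-complexes-in-terms-of-graphs}), so there is no internal proof to match; your argument is a valid self-contained substitute. Every step checks out: a simplex of \(\mathrm{Cl}(G)\) not in \(\mathrm{Cl}(G-e)\) must contain both \(x\) and \(y\), hence lies in \(Z=N[e]\) and so in \(\Delta\); the intersection \(\Delta\cap\mathrm{Cl}(G-e)\) is exactly the union of the two facets of \(\Delta\) on \(Z\setminus\{y\}\) and \(Z\setminus\{x\}\), meeting in the full simplex on \(A\); and gluing the strong deformation retraction of \(\Delta\) onto this contractible intersection with the identity on \(\mathrm{Cl}(G-e)\) is legitimate since both pieces are subcomplexes agreeing on the overlap. (A small simplification: the intersection is a cone with apex any \(a\in A\), since adjoining \(a\) to a subset of \(Z\) missing \(x\) or \(y\) again misses \(x\) or \(y\); this avoids invoking the gluing lemma twice.) The difference from the cited sources is one of technique: there the result is obtained by elementary simplicial collapses --- pairing each simplex \(\sigma\supseteq\{x,y\}\) with \(\sigma\cup\{a\}\) for a fixed \(a\in A\) --- which yields the stronger conclusion that \(\mathrm{Cl}(G)\) collapses onto \(\mathrm{Cl}(G-e)\) (a simple homotopy equivalence), whereas your Mayer--Vietoris-style decomposition gives plain homotopy equivalence. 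For the statement as used in this paper, that is all that is needed, and your flagging of where the hypothesis \(e\subsetneq N[e]\) enters (without it the intersection is \(\{x\}\sqcup\{y\}\), a \(0\)-sphere, and the retraction fails, as the example \(G=K_{2}\) shows) is exactly right.
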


If \(H\) and \(G\) are graphs such that \(G\) has no induced subgraph isomorphic to \(H\), we say that \(G\) is \emph{\(H\)-free}. It is immediate that any low degree graph is \(O_{3}\)-free. 

\begin{theorem}
\label{triangle-in-unique-clique}
(Theorem 4.2 from \cite{chessboard-graphs}) Let \(G\) be an \(O_{3}\)-free graph such that every triangle in \(G\) is contained in a unique clique. Then \(G\) is homotopy \(K\)-invariant.
\end{theorem}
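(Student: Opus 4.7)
The plan is to deploy the Nerve Theorem framework described in the introduction. Let $\mathcal{C}$ be the cover of $\mathrm{Cl}(G)$ by the subcomplexes indexed by the cliques of $G$; each member of $\mathcal{C}$ is a simplex, and every nonempty intersection of members of $\mathcal{C}$ is again a simplex, so the Nerve Theorem yields $\mathrm{Cl}(G)\simeq\mathcal{N}(\mathcal{C})$. Since by definition $\mathrm{Cl}(K(G))=\overline{\mathcal{N}}(\mathcal{C})$, the task reduces to showing that the inclusion $\iota\colon\mathcal{N}(\mathcal{C})\hookrightarrow\overline{\mathcal{N}}(\mathcal{C})$ is a homotopy equivalence. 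The simplices of $\overline{\mathcal{N}}(\mathcal{C})\setminus\mathcal{N}(\mathcal{C})$ are precisely the neckties of $G$, so the whole proof lives in understanding how neckties can be eliminated up to homotopy.

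The first use of the hypothesis is to bound the size of pairwise clique intersections: if two distinct cliques of $G$ shared three vertices, they would share a triangle, contradicting the uniqueness condition, so any two cliques meet in at most an edge. I would then analyze a minimal necktie $Q=\{q_1,\ldots,q_k\}$ and extract a \emph{witness clique}: by the minimality of $Q$ and the structure of pairwise intersections, one can produce a triangle $T$ with edges distributed among three of the $q_i$; the uniqueness hypothesis says $T$ lies in a single clique $q_T$, and $q_T$ meets every $q_i$ in at least an edge, so $q_T$ is adjacent in $K(G)$ to each $q_i$. The role of the $O_3$-free hypothesis at this stage is to rule out the only pattern in which the witness could fail to lie outside $Q$ or could be forced to collide with another necktie's witness, since such a collision is exactly what an induced octahedron produces at the level of cliques.

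Equipped with the witness $q_T$ for each minimal necktie $Q$, I would construct a deformation retraction of $\overline{\mathcal{N}}(\mathcal{C})$ onto $\mathcal{N}(\mathcal{C})$ by induction on the neckties ordered by inclusion. The star of $q_T$ in $\overline{\mathcal{N}}(\mathcal{C})$ contains $Q$ and is a cone, which provides a local collapse that sweeps $Q$ across $q_T$ without altering $\mathcal{N}(\mathcal{C})$. The induction removes one necktie at a time; equivalently, at the graph level this corresponds to repeated applications of Theorem \ref{prisner-removing-dominated} or Theorem \ref{removing-edge} inside $K(G)$ until the neckties are exhausted, showing that $K(G)$ is homotopy equivalent to its sub-complex $\mathcal{N}(\mathcal{C})\simeq G$.

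The main obstacle is organizing these local collapses into a coherent global argument once neckties of size greater than three are present. Verifying that every higher-dimensional necktie also admits a witness clique, and that the witnesses of nested or overlapping neckties are compatible, is the step where the $O_3$-free hypothesis must do most of the work; the classification in Theorem \ref{facts-from-previous-paper} illustrates how intricate the structural description of neckties can become already in the low-degree regime, and I expect the general case to require a careful inductive framework in which witnesses are chosen consistently across all dimensions simultaneously.
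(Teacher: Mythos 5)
First, a point of context: the paper does not prove this statement at all --- it is imported verbatim as Theorem 4.2 of \cite{chessboard-graphs} and used as a black box. So your attempt can only be judged on its own merits as a reproof of the cited result, and on those merits it has a genuine gap at its central step. Your plan is to eliminate each necktie $Q$ by a local collapse through a witness clique $q_{T}$, the unique clique containing a triangle $T$ assembled from pairwise intersections of members of $Q$. But a necktie is, by the paper's definition, a clique of $K(G)$, i.e.\ a \emph{maximal} pairwise-intersecting family of cliques of $G$. Consequently, if $q_{T}$ meets every member of $Q$ and $q_{T}\notin Q$, then $Q\cup\{q_{T}\}$ is a complete subgraph of $K(G)$ properly containing the clique $Q$, which is impossible. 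So the witness, when it exists, is forced to lie \emph{inside} $Q$ (exactly as in Theorem \ref{facts-from-previous-paper}, where the center $T$ is a member of $Q_{T}$), and the simplex $Q\cup\{q_{T}\}$ across which you propose to ``sweep'' $Q$ does not exist. Merely noting that the star of $q_{T}$ is a cone gives nothing: to retract $\overline{\mathcal{N}}(\mathcal{C})$ onto $\mathcal{N}(\mathcal{C})$ you would need something like a free-face collapse or an acyclic Morse matching $\sigma\leftrightarrow\sigma\cup\{q_{T}\}$ defined on \emph{all} intersecting families $\sigma$ with empty total intersection --- not only the maximal ones, so your identification of the non-nerve simplices with neckties is also inaccurate --- and the consistency of such a matching when a simplex admits several competing witnesses is precisely what you concede, in your final paragraph, that you have not done. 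That deferred step is not a technicality; it is the theorem. (Compare how much machinery the low-degree case alone requires: Lemmas \ref{triangle-in-two-neckties} and \ref{kg-no-induced-octahedra}, plus a delicate vertex-removal scheme, exist exactly to control witness collisions.)

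Two further soft spots. First, your witness construction is only carried out for a triple $q_{1},q_{2},q_{3}$ with empty triple intersection; the resulting $q_{T}$ is then guaranteed to meet only those three members, whereas your matching would need it compatible with every member of the simplex being paired. Second, the $O_{3}$-free hypothesis never enters your argument concretely, yet it cannot be decorative: the octahedron itself satisfies ``every triangle lies in a unique clique'' (each triangle of $O_{3}$ \emph{is} a clique), its cliques form neckties, and $K(O_{3})=O_{4}\not\simeq O_{3}$. Any correct proof must therefore invoke $O_{3}$-freeness at a specific combinatorial juncture --- plausibly to break exactly the symmetric pattern of cliques that the octahedron realizes --- and your sketch, which only ``expects'' it to rule out bad collisions, never identifies that juncture. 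As it stands the proposal is a reasonable research program, and its first reduction (nerve theorem plus the bound $|q\cap q'|\le 2$ on clique intersections) is sound, but the heart of the argument is missing and the one concrete mechanism you do propose fails for maximal neckties.
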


Following \cite{MR2059528}, we write \(G\xrightarrow{{\#}} H\) if \(H\) is isomorphic to a subgraph \(H_{0}\) of \(G\) such that every vertex in \(G\) not in \(H_{0}\) is dominated by a vertex in \(H_{0}\). We have then that \(G\xrightarrow{{\#}}H\) implies \(G\simeq H\). If \(x,y\in G\) are such that \(N[x]=N[y]\) (so that they dominate each other), we say that \(x,y\) are \emph{twins}. 

\begin{theorem}
\label{hash-arrow-in-cliques}
(Theorem 3 from \cite{MR2059528}) If  \(G\xrightarrow{{\#}} H\), then  \(K(G)\xrightarrow{{\#}} K(H)\). 
\end{theorem}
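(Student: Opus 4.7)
The plan is to construct an embedding $q\mapsto\hat{q}$ from the vertices of $K(H)$ into those of $K(G)$, verify that its image is an induced subgraph isomorphic to $K(H)$, and then show every other clique of $G$ is dominated in $K(G)$ by some $\hat{q}$. I will assume $H$ is an induced subgraph of $G$ with every $x\in V(G)\setminus V(H)$ dominated in $G$ by some $y\in V(H)$. The key preliminary observation is that every clique of $G$ meets $V(H)$: if $x\in q$ lies outside $V(H)$ with dominator $y$, then $q\subseteq N[x]\subseteq N[y]$, so $q\cup\{y\}$ is complete, forcing $y\in q$ by maximality. For any clique $q$ of $H$, I choose a clique $\hat{q}$ of $G$ extending $q$; maximality of $q$ in $H$ forces $\hat{q}\cap V(H)=q$, which gives injectivity of $q\mapsto\hat{q}$.

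The next step is to verify that $q\mapsto\hat{q}$ is an isomorphism onto the induced subgraph of $K(G)$ on $\{\hat{q}:q\in K(H)\}$. The forward direction $q_1\cap q_2\ne\emptyset\Rightarrow\hat{q}_1\cap\hat{q}_2\ne\emptyset$ is immediate. For the reverse, a vertex $z\in\hat{q}_1\cap\hat{q}_2$ either lies in $V(H)$, in which case $z\in q_1\cap q_2$, or is dominated by some $w\in V(H)$; the preliminary observation applied to each $\hat{q}_i$ then forces $w\in\hat{q}_i\cap V(H)=q_i$, so $w\in q_1\cap q_2$.

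For the domination step, given any clique $q^{*}$ of $G$, the set $q^{*}\cap V(H)$ is a nonempty complete subgraph of $H$, so I extend it to a clique $q$ of $H$. Then $\hat{q}$ dominates $q^{*}$ in $K(G)$: if a clique $q''$ of $G$ meets $q^{*}$ at some $z\in V(H)$, then $z\in q^{*}\cap V(H)\subseteq\hat{q}$ and hence $z\in q''\cap\hat{q}$; while if $z\notin V(H)$, the dominator $w\in V(H)$ of $z$ lies in both $q^{*}$ and $q''$ by the preliminary observation, so $w\in q^{*}\cap V(H)\subseteq\hat{q}$ and $w\in q''\cap\hat{q}$. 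This completes the witness for $K(G)\xrightarrow{{\#}}K(H)$. The main subtlety I anticipate is the reflection half of the intersection check, which is what guarantees the image is \emph{induced} rather than merely a subgraph; without it one only exhibits a graph homomorphism and the $\xrightarrow{{\#}}$ relation would not be properly witnessed. Everything else follows smoothly from the single crucial fact that a dominator in $V(H)$ of a vertex $z$ always accompanies $z$ in any clique containing $z$.
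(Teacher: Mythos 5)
This theorem is quoted from \cite{MR2059528} and the paper contains no proof of it, so there is no internal proof to compare against; judged on its own, your argument is correct and complete, and it is essentially the standard one from the cited source: the observation that a dominating vertex \(w\in V(H)\) of \(z\) lies in every clique of \(G\) containing \(z\) (by maximality, since such a clique is contained in \(N[z]\subseteq N[w]\)), the extension map \(q\mapsto\hat q\) with \(\hat q\cap V(H)=q\) giving injectivity and an isomorphism onto an \emph{induced} subgraph of \(K(G)\), and domination of an arbitrary clique \(q^{*}\) by \(\hat q\) where \(q\) is a clique of \(H\) extending \(q^{*}\cap V(H)\). One remark: you read the paper's ``subgraph \(H_{0}\)'' as an induced subgraph, which is the intended definition of \(\xrightarrow{{\#}}\) in \cite{MR2059528}, and your proof genuinely uses inducedness (e.g., to conclude that \(\hat q\cap V(H)\) is complete in \(H\) and hence equal to \(q\)), so it was right to make that assumption explicit.
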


If \(q_{1},q_{2},\ldots,q_{n}\) are distinct cliques in a graph \(G\) such that none of them is a subset of the union of the others, then for each \(i=1,\ldots,n\) there is a vertex \(x_{i}\in q_{i}\) that is not in any of the other cliques. In this case we say that \((x_{1},\ldots,x_{n})\) is a \emph{selection from \((q_{1},q_{2},\ldots,q_{n})\)}. 

\begin{lemma}
\label{clique-in-union}
Let \(q_{1},q_{2},\ldots,q_{n}\) be distinct cliques in a graph \(G\) such that none of them is a subset of the union of the others. Let \(q\) be a clique in \(G\) different from any of \(q_{1},q_{2},\ldots,q_{n}\) such that \(q\subseteq q_{1}\cup\cdots\cup q_{n}\) but \(q\) is not contained in the union of \(n-1\) cliques from \(q_{1},q_{2},\ldots,q_{n}\). Then there is a selection from \((q_{1},q_{2},\ldots,q_{n})\) that consists of elements from \(q\).
\end{lemma}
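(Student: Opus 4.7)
The plan is to show, for each index $i$, that the set
\[
A_i \;=\; (q\cap q_i)\;\setminus\;\bigcup_{j\neq i} q_j
\]
is nonempty, and then to pick $x_i\in A_i$ for each $i$. By construction each $x_i$ lies in $q_i\cap q$ and in none of the other $q_j$, so $(x_1,\ldots,x_n)$ will be a selection from $(q_1,\ldots,q_n)$ whose entries all belong to $q$, which is exactly what the lemma asks for.

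To show $A_i\neq\emptyset$, I would argue by contradiction. Assume $A_i=\emptyset$, which means every vertex of $q\cap q_i$ belongs to some $q_j$ with $j\neq i$, i.e.\ $q\cap q_i\subseteq\bigcup_{j\neq i}q_j$. Since $q\subseteq q_1\cup\cdots\cup q_n$ by hypothesis, we can decompose
\[
q \;=\; (q\cap q_i)\;\cup\;\bigcup_{j\neq i}(q\cap q_j).
\]
Substituting the inclusion $q\cap q_i\subseteq\bigcup_{j\neq i}q_j$ into the first piece gives $q\subseteq\bigcup_{j\neq i}q_j$, contradicting the hypothesis that $q$ is not contained in the union of any $n-1$ of the cliques $q_1,\ldots,q_n$. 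Hence $A_i\neq\emptyset$ for every $i$.

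There is really no obstacle here; the argument is just a careful bookkeeping exercise with the two hypotheses on $q$. The only subtle point to state cleanly is that the hypothesis \emph{``none of the $q_i$ is a subset of the union of the others''} is in fact not used in this particular deduction (it is what ensures that selections from $(q_1,\ldots,q_n)$ exist in the first place, and so makes the conclusion meaningful); the existence of the $x_i$ in $q$ follows purely from the two assumptions that $q\subseteq\bigcup_j q_j$ and that $q$ does not lie in any proper sub-union. After producing the $x_i$, the proof ends immediately by invoking the definition of a selection.
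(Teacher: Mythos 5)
Your proof is correct and is essentially the paper's own argument: since \(q\subseteq q_{1}\cup\cdots\cup q_{n}\), your set \(A_{i}\) coincides with the paper's set \(q-(q_{1}\cup\cdots\cup \hat{q}_{i}\cup\cdots\cup q_{n})\), whose nonemptiness is exactly the hypothesis that \(q\) is not contained in the union of any \(n-1\) of the cliques. Your contradiction framing, and your (accurate) remark that the hypothesis on the \(q_{i}\) themselves serves only to make the notion of a selection well defined, are just expanded bookkeeping around the paper's one-line choice of the \(x_{i}\).
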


\begin{proof}
For each \(i\) with \(1\leq i\leq n\), take \(x_{i}\in q-(q_{1}\cup\cdots\cup \hat{q}_{i}\cup\cdots\cup q_{n})\). 
\end{proof}

\begin{lemma}
\label{different-cliques-subgraph}
Let \(H\) be an induced subgraph of \(G\). If \(c\) is a complete in \(H\) that is contained in two different cliques of \(H\), then \(c\) is contained in two different cliques of \(G\).
\end{lemma}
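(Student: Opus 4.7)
The key observation is that since $H$ is an induced subgraph of $G$, any set of vertices that is complete in $H$ is automatically complete in $G$, because all the required edges are already present in $H$. The plan is to lift the two distinct cliques of $H$ containing $c$ to two distinct cliques of $G$ containing $c$.

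Concretely, let $q_{1}$ and $q_{2}$ be two distinct cliques of $H$ with $c\subseteq q_{1}$ and $c\subseteq q_{2}$. Since $H$ is induced, $q_{1}$ and $q_{2}$ are completes of $G$, so each extends to some clique of $G$: pick $Q_{1}$ and $Q_{2}$ cliques of $G$ with $q_{1}\subseteq Q_{1}$ and $q_{2}\subseteq Q_{2}$. Both contain $c$, so it only remains to check $Q_{1}\ne Q_{2}$.

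Suppose for contradiction that $Q_{1}=Q_{2}=Q$. Then $q_{1}\cup q_{2}\subseteq Q$, so $q_{1}\cup q_{2}$ is a complete subgraph of $G$ whose vertex set lies entirely in $V(H)$. Because $H$ is induced in $G$, this forces $q_{1}\cup q_{2}$ to be complete in $H$ as well. Maximality of $q_{1}$ among completes of $H$ gives $q_{2}\subseteq q_{1}$, and symmetrically $q_{1}\subseteq q_{2}$, contradicting $q_{1}\ne q_{2}$.

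There is no real obstacle here; the only subtlety worth flagging is the role of $H$ being \emph{induced}, which is what lets us pull a completeness statement in $G$ back to a completeness statement in $H$ and thereby exploit the maximality of $q_{1}$ and $q_{2}$.
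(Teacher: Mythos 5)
Your proof is correct and follows essentially the same route as the paper: extend \(q_{1},q_{2}\) to cliques of \(G\), and use the fact that \(H\) is induced to conclude that if both extensions coincided, \(q_{1}\cup q_{2}\) would be complete in \(H\), forcing \(q_{1}=q_{2}\) by maximality. The only difference is cosmetic --- the paper frames the contradiction as ``suppose \(c\) lies in a unique clique \(q\) of \(G\)'' while you suppose \(Q_{1}=Q_{2}\) --- and your write-up makes the maximality step slightly more explicit.
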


\begin{proof}
Suppose that the only clique of \(G\) that contains \(c\) is \(q\). Let \(c\subseteq q_{1}\cap q_{2}\), where \(q_{1}, q_{2}\) are different cliques of \(H\). Since \(q_{1},q_{2}\) are completes of \(G\), they can be extended to cliques of \(G\). But our assumption implies that both \(q_{1},q_{2}\) are contained in \(q\). Since the subgraph \(H\) is induced, we must have that \(q_{1}\cup q_{2}\) is a complete in \(H\), and so \(q_{1}=q_{2}\), which is a contradiction. 
\end{proof}

\begin{lemma}
\label{clique-touching-three-triangles}
Let \(G\) be a graph with \(\Delta(G)\leq 4\). Let \(x,y_{1},y_{2},z_{1},z_{2}\in G\) such that \(q=\{x,y_{1},z_{1}\}\), \(q_{1}=\{x,y_{1},y_{2}\}\), \(q_{2}=\{x,z_{1},z_{2}\}\) are cliques. Let \(q'\in K(G)\) such that \(x\in q'\) and \(q'\not\in\{q,q_{1},q_{2}\}\). Then \(q'=\{x,y_{2},z_{2}\}\) and so \(y_{2}\sim z_{2}\).
\end{lemma}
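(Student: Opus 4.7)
The plan is to reduce the problem to understanding the induced subgraph $G[N[x]]$. Since every clique of $G$ containing $x$ is entirely contained in $N[x]$, and maximality in $G$ coincides with maximality in $G[N[x]]$, the cliques through $x$ are in bijection with the maximal cliques of $G[N(x)]$ (each augmented with $\{x\}$). So I would pin down the precise structure of $G[N(x)]$.

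First I would verify that $y_1,y_2,z_1,z_2$ are pairwise distinct. Most pairs are forced to differ simply because $q,q_1,q_2$ are triples, or because any identification would collapse two of these into the same clique (for instance, $y_2=z_1$ forces $q_1=q$). The only genuinely nontrivial case is $y_2=z_2$: here the three adjacencies $y_1\sim z_1$, $y_1\sim y_2$, $z_1\sim y_2$ produce the complete subgraph $\{x,y_1,z_1,y_2\}$, contradicting the maximality of $q$. Since $\Delta(G)\leq 4$ and $\{y_1,y_2,z_1,z_2\}\subseteq N(x)$, equality holds: $N(x)=\{y_1,y_2,z_1,z_2\}$.

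Next I would determine the edges of $G[N(x)]$. The three given cliques contribute the edges $y_1y_2$, $y_1z_1$ and $z_1z_2$. The non-edges $y_2z_1$ and $y_1z_2$ both follow by the same maximality trick: if $y_2\sim z_1$ then $\{x,y_1,y_2,z_1\}$ is complete, contradicting maximality of $q$; if $y_1\sim z_2$ then $\{x,y_1,z_1,z_2\}$ is complete, contradicting maximality of $q_2$. Thus $G[N(x)]$ is the path $y_2$--$y_1$--$z_1$--$z_2$ together with, possibly, the edge $y_2z_2$.

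Finally I would case-split on the edge $y_2z_2$. If $y_2\not\sim z_2$, then $G[N(x)]$ is a path whose maximal cliques are precisely its three edges, so the only cliques of $G$ containing $x$ are $q,q_1,q_2$, contradicting the hypothesis that $q'$ exists. Therefore $y_2\sim z_2$, the graph $G[N(x)]$ is the $4$-cycle $y_2y_1z_1z_2$, and its four maximal cliques are its four edges; the fourth clique through $x$ is forced to be $\{x,y_2,z_2\}$, whence $q'=\{x,y_2,z_2\}$ as claimed. The only delicate point is excluding $y_2=z_2$; everything else is direct bookkeeping with the degree constraint and the maximality of the given cliques.
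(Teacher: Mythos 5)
Your proof is correct and takes essentially the same approach as the paper's: both confine the analysis to \(N[x]=\{x,y_{1},y_{2},z_{1},z_{2}\}\) (forced by \(\Delta(G)\leq 4\)) and use maximality of \(q,q_{1},q_{2}\) to exclude \(y_{1}\) and \(z_{1}\) from \(q'\), leaving \(q'=\{x,y_{2},z_{2}\}\). Your write-up merely makes explicit what the paper leaves implicit, namely the pairwise distinctness of the four neighbors and the path-versus-\(4\)-cycle structure of \(G[N(x)]\).
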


\begin{proof}
We have \(q'\subseteq N[x]=\{x,y_{1},z_{1},y_{2},z_{2}\}\). We cannot have \(y_{1}\in q'\) because \(\{x,y_{1}\}\) cannot be extended to a clique different from \(q,q_{1}\). Similarly, we have \(z_{1}\not\in q'\). Hence \(q'=\{x,y_{2},z_{2}\}\).
\end{proof}

\begin{lemma}
\label{facts-intersection-with-internal}
Let \(G\) be a graph with \(\Delta(G)\leq 4\). If \(T\) is an internal triangle, and \(T'\) is any triangle such that \(T\cap T'=\{x\}\), then \(N(x)\) induces a \(4\)-cycle.
\end{lemma}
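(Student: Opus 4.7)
The plan is to exploit the internality of $T$ together with the degree bound $\Delta(G)\le 4$ to pin down $N(x)$ as a four–element set, and then to rule out the two ``diagonal'' edges using maximality of $T$.

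Write $T=\{x,y_{1},z_{1}\}$. Because $T$ is internal, the edge $\{x,y_{1}\}$ lies in some clique $q_{1}\ne T$, and the edge $\{x,z_{1}\}$ lies in some clique $q_{2}\ne T$. Since $T$ is maximal and $q_{1}\ne T$, we must have $z_{1}\notin q_{1}$; and since $\{x,y_{1}\}$ is not itself a clique (being properly contained in $T$), $q_{1}$ must contain some further vertex $y_{2}\notin\{x,y_{1},z_{1}\}$. Symmetrically, $q_{2}$ contains a vertex $z_{2}\notin\{x,y_{1},z_{1}\}$. The first non-trivial step is to check that $y_{2}\ne z_{2}$: if they coincided then $y_{2}$ would be adjacent to all of $x$, $y_{1}$ (via $q_{1}$) and $z_{1}$ (via $q_{2}$), so $\{x,y_{1},z_{1},y_{2}\}$ would be a complete properly containing $T$, contradicting the maximality of $T$. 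Combined with $\Delta(G)\le 4$, this forces $N(x)=\{y_{1},z_{1},y_{2},z_{2}\}$.

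Next I locate $T'$. Writing $T'=\{x,a,b\}$, we have $a,b\in N(x)\setminus\{y_{1},z_{1}\}=\{y_{2},z_{2}\}$, so $\{a,b\}=\{y_{2},z_{2}\}$, and in particular $y_{2}\sim z_{2}$. At this point $N(x)$ is known to contain the four edges $y_{1}z_{1}$ (from $T$), $y_{1}y_{2}$ (from $q_{1}$), $z_{1}z_{2}$ (from $q_{2}$) and $y_{2}z_{2}$ (from $T'$), which already form a $4$-cycle $y_{1}z_{1}z_{2}y_{2}$.

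The last step is to rule out the two diagonals. If $y_{1}\sim z_{2}$ then, using also $x\sim y_{1},z_{1},z_{2}$, $y_{1}\sim z_{1}$ (from $T$) and $z_{1}\sim z_{2}$ (from $q_{2}$), the set $\{x,y_{1},z_{1},z_{2}\}$ would be a complete strictly containing $T$, contradicting maximality. The argument for $y_{2}\not\sim z_{1}$ is symmetric. Therefore $N(x)$ induces exactly the $4$-cycle.

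Conceptually this is entirely routine; the only place one has to be a little careful is the symmetry-breaking argument that $y_{2}\ne z_{2}$, which is really the only obstacle and is handled in one line by invoking the maximality of the clique $T$. Note that Lemma \ref{clique-touching-three-triangles} gives an alternate, slightly slicker route: once $q_{1},q_{2}$ are produced, that lemma can be used to identify any fourth clique through $x$ as $\{x,y_{2},z_{2}\}$, and in particular identifies $T'$ this way.
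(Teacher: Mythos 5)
Your proof is correct and takes essentially the same route as the paper's: internality of \(T\) produces cliques \(q_{1},q_{2}\ne T\) on the edges \(\{x,y_{1}\}\) and \(\{x,z_{1}\}\), the bound \(\Delta(G)\leq 4\) pins down \(N(x)\) as a four-element set containing \(T'\setminus\{x\}\), and maximality of the clique \(T\) supplies the needed non-adjacencies. If anything, your write-up is slightly more complete than the paper's, which stops after establishing the cross edges \(y\sim y'\) and \(z\sim z'\) and leaves implicit both the exclusion of the two diagonals and the distinctness check corresponding to your \(y_{2}\ne z_{2}\) (each immediate from the maximality of \(T\), as you show).
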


\begin{proof}
Suppose \(T=\{x,y,z\}\), \(T'=\{x,y',z'\}\). Since \(T\) is internal, there is \(q\in K(G)\) such that \(q\ne T\) and \(q\cap T = \{x,y\}\). Then \(q\subseteq N[x] = \{x,y,z,y',z'\}\). Since \(|q|\geq 3\) and \(z\not\in q\), we have that one of \(y',z'\) is in \(q\). Without loss, assume \(y'\in q\) and so \(y\sim y'\). Now, considering \(q'\ne T\) such that \(q'\cap T = \{x,z\}\) we obtain that \(z\sim z'\).
\end{proof}

\begin{lemma}
\label{all-triangles-internal}
Let \(G\) be a graph with \(\Delta(G)\leq 4\), with an internal triangle \(T\) such that all the ears of \(Q_{T}\) are internal triangles. Then \(G\cong O_{3}\).
\end{lemma}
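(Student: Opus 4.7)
The plan is to extract strong structural constraints from the hypotheses using the degree bound, and then read off that the induced subgraph on six specific vertices is exactly $O_3$.

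First, I fix notation: write $T=\{x,y,z\}$ and, by Theorem~\ref{facts-from-previous-paper}(\ref{facts-necktie-is-centered}), denote the three ears as the triangles $T_1=\{y,z,a\}$, $T_2=\{x,z,b\}$, $T_3=\{x,y,c\}$, with $a,b,c\notin T$. Since $T$ is a maximal complete, $a,b,c$ must be pairwise distinct: for example, if $a=b$, then the adjacencies forced by $T_1$ and $T_2$ would make $\{x,y,z,a\}$ a $4$-clique, contradicting the maximality of $T$.

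Next, I use $\Delta(G)\leq 4$ to pin down the three neighborhoods of $T$. From the cliques $T,T_2,T_3$ through $x$ we have $\{y,z,b,c\}\subseteq N(x)$, and $|N(x)|\leq 4$ forces equality, so $N[x]=\{x,y,z,b,c\}$ and in particular $x\not\sim a$. Symmetrically, $y\not\sim b$ and $z\not\sim c$.

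The crux is to use the internality of the ears to show that $\{a,b,c\}$ is itself a triangle. Because $T_3$ is internal, edge $xc$ lies in a clique $q\neq T_3$; the first two steps give $q\subseteq N[x]=\{x,y,z,b,c\}$, so $q$ contains $\{x,c\}$ plus one of $y,z,b$. Choosing $y$ would recover $T_3$, and choosing $z$ would force $z\sim c$, contradicting $z\not\sim c$; hence $b\in q$ and $b\sim c$. The analogous analysis of edge $yc$ in $T_3$ yields $a\sim c$, and the analysis of edge $za$ in the internal ear $T_1$ yields $a\sim b$.

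Combining all adjacencies, the induced subgraph on $\{x,y,z,a,b,c\}$ is $K_6$ minus the perfect matching $\{xa,yb,zc\}$, which is exactly $O_3$. Each of the six vertices now has degree $4$, so the degree bound forbids any further edges leaving the set; therefore $\{x,y,z,a,b,c\}$ is a connected component of $G$, and the conclusion $G\cong O_3$ follows (in the intended connected setting). The main obstacle is the third step: ruling out the ``wrong'' choices for each second clique requires a sharp prior determination of $N[x],N[y],N[z]$, so the degree-based identification in the second step must be firmly in hand before internality can be exploited.
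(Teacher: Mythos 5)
Your proof is correct and follows essentially the same route as the paper's: both use the degree bound \(\Delta(G)\leq 4\) to pin down the closed neighborhoods of the vertices of \(T\), then exploit internality of the ears to force the three adjacencies among the apex vertices \(a,b,c\), exhibiting an induced \(O_{3}\). The only differences are cosmetic: you inline the content of Lemma~\ref{facts-intersection-with-internal} rather than citing it, and you make explicit two points the paper glosses over — the pairwise distinctness of \(a,b,c\) and the final step that the degree bound plus connectedness upgrades the induced \(O_{3}\) to \(G\cong O_{3}\).
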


\begin{proof}
Let \(T=\{x,y,z\}\) be such triangle. Let \(T_{1}=\{x,y,z'\}\) be a triangle in \(G\) with \(T_{1}\ne T\), similarly, let \(T_{2}=\{x,z,y'\}\) be a triangle with \(T_{2}\ne T\), and \(T_{3}=\{y,z,x'\}\) with \(T_{3}\ne T\). Then \(T_{1}, T_{2}, T_{3}\) are internal. Since \(T_{1}\cap T_{2}=\{x\}\), from Lemma \ref{facts-intersection-with-internal} we get that \(N(x)\) is a 4-cycle, hence \(y'\sim z'\).  We can obtain similarly that \(x'\sim y'\) and \(y'\sim z'\). It follows that \(G\cong O_{3}\).
\end{proof}

\begin{lemma}
\label{twin-internal-triangles}
Let \(G\) be a graph with \(\Delta(G)\leq 4\), with two internal triangles \(T_{1},T_{2}\) sharing an edge such that as vertices of \(K(G)\) they are not twins. Then \(G\cong O_{3}\).
\end{lemma}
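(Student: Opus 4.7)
Write $T_1=\{x,y,z_1\}$ and $T_2=\{x,y,z_2\}$, so the shared edge is $\{x,y\}$ and $z_1\ne z_2$. My plan is first to use the two internality hypotheses, combined with $\Delta(G)\le 4$, to completely pin down the neighborhoods of $x,y,z_1,z_2$, and only then to bring in the non-twin hypothesis to force the last remaining edge that makes $G$ an octahedron.

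The first observation is that $z_1\not\sim z_2$, for otherwise $\{x,y,z_1,z_2\}$ is a complete properly containing $T_1$, contradicting its being a clique. Since $T_1$ is internal, the edge $\{x,z_1\}$ lies in a clique different from $T_1$, so there exists $v\in N(x)\cap N(z_1)$ with $v\ne y$; as $z_2\not\sim z_1$ the only possibility is that $v$ is a fourth neighbor $w_x$ of $x$, in particular $\deg(x)=4$ and $w_x\sim z_1$. Applying the same reasoning to $T_2$ and the edge $\{x,z_2\}$, the unique fourth neighbor of $x$ must again do the job, so $w_x\sim z_2$ as well. Running this argument at $y$ yields a fourth neighbor $w_y$ of $y$ with $w_y\sim z_1$ and $w_y\sim z_2$. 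Note that $w_x\ne w_y$, for otherwise $\{x,y,z_1,w_x\}$ would be a complete properly extending $T_1$. Since $z_1$ now has the four distinct neighbors $x,y,w_x,w_y$ and $\Delta(G)\le 4$, we have $N(z_1)=\{x,y,w_x,w_y\}$, and symmetrically $N(z_2)=\{x,y,w_x,w_y\}$.

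Now I invoke the non-twin hypothesis. Every clique of $G$ that meets $\{x,y\}$ is adjacent in $K(G)$ to both $T_1$ and $T_2$, so any clique $q$ witnessing that $T_1,T_2$ are not twins in $K(G)$ must be disjoint from $\{x,y\}$ and meet exactly one of $\{z_1\},\{z_2\}$; by symmetry between the two cases, assume $z_1\in q$ and $z_2\notin q$. Then $q\subseteq\{z_1\}\cup(N(z_1)\setminus\{x,y\})=\{z_1,w_x,w_y\}$. The pair $\{z_1,w_x\}$ is contained in the triangle $\{x,z_1,w_x\}$ (all three edges having been verified above) so it is not a clique, and similarly $\{z_1,w_y\}$ is contained in the triangle $\{y,z_1,w_y\}$. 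Hence the only possibility is $q=\{z_1,w_x,w_y\}$, which in particular forces $w_x\sim w_y$.

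It remains to assemble the octahedron. With $w_x\sim w_y$ in hand, the six vertices $x,y,z_1,z_2,w_x,w_y$ carry exactly the adjacencies of the complement of the three-edge matching $\{x,w_y\},\{y,w_x\},\{z_1,z_2\}$: indeed, every remaining pair has been shown adjacent, and none of the three listed pairs can be adjacent because that would push some vertex past degree four. Each of these six vertices already has degree $4$ inside this subgraph, and $\Delta(G)\le 4$ forbids any further incident edge, so the induced copy of $O_3$ is a whole connected component of $G$. The only delicate step is the passage from the non-twin hypothesis to $w_x\sim w_y$, since it relies on ruling out every smaller candidate for the witnessing clique $q$; the rest is bookkeeping with neighborhoods.
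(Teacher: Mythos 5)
Your proof is correct and follows essentially the same route as the paper's: you pin down the common neighbors \(w_{x},w_{y}\) (the paper's \(x',y'\)) using internality of both triangles together with the degree bound, and then use a witness clique \(q\) with \(q\cap T_{1}=\{z_{1}\}\) to force the final edge \(w_{x}\sim w_{y}\) and assemble the octahedron. The only difference is presentational: you inline, by direct neighborhood analysis, the steps the paper delegates to Lemma \ref{facts-intersection-with-internal} and Lemma \ref{clique-touching-three-triangles}, and you make explicit the (shared, implicit) final step that the six vertices form a component isomorphic to \(O_{3}\).
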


\begin{proof}
Suppose \(T_{1}=\{x,y,u\}\), \(T_{2}=\{x,y,u'\}\). Since \(T_{1}\) is internal, there is \(x'\ne y\) with \(x'\sim x\) and \(x'\sim u\), and there is \(y'\ne x\) with \(y'\sim u\) and \(y'\sim y\). Since \(T_{1}\) is a clique, we have that \(x'\ne y'\). Then \(T_{2}\cap\{x,x',u\}=\{x\}\). By Lemma \ref{facts-intersection-with-internal}, \(x'\sim u'\). Similarly, \(T_{2}\cap \{u,y,y'\}=\{y\}\), hence \(y'\sim u'\). Suppose that \(T_{2}\) does not dominate \(T_{1}\) in \(K(G)\). Then there is \(q\in K(G)\) with \(q\cap T_{1}\ne\emptyset\) and \(q\cap T_{2}=\emptyset\). Hence \(q\cap T_{1}=\{u\}\). Then Lemma \ref{clique-touching-three-triangles} applies to prove \(x'\sim y'\) and so \(G\cong O_{3}\).
\end{proof}

\section{The homotopy type of a low degree graph}
\label{sec:org4e3ffea}

The \emph{clique number} \(\omega(G)\) is the maximum order of a clique in \(G\).

\begin{theorem}
\label{wedge-of-circumferences}
If \(G\) is a low degree graph, then \(G\) is homotopy equivalent to a wedge of circumferences.
\end{theorem}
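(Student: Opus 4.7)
The plan is to proceed by strong induction on \(|V(G)|+|E(G)|\), reducing \(G\) at each stage to a smaller low degree graph of the same homotopy type until reaching a graph whose clique complex is manifestly a wedge of circumferences. The three reduction tools I will use are: removal of a dominated vertex via Theorem \ref{prisner-removing-dominated}, removal of an edge whose closed neighborhood is complete via Theorem \ref{removing-edge}, and the structural Lemma \ref{all-triangles-internal}, which rules out the obstructing configuration \(G=O_{3}\) while there are still triangles to deal with.

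First I would dispatch the easy cases. If \(G\) has a dominated vertex \(x\), then \(G\simeq G-x\) by Theorem \ref{prisner-removing-dominated}; the resulting graph is still connected (via the dominator), still has \(\Delta\leq 4\), and cannot equal \(O_{3}\) because \(O_{3}\) is \(4\)-regular and so has no dominated vertex, so the inductive hypothesis applies. If \(G\) has no triangles, then \(\mathrm{Cl}(G)\) is a connected \(1\)-dimensional simplicial complex, hence homotopy equivalent to a wedge of circumferences. Finally, if \(G\) contains a \(K_{5}\), then the degree bound \(\Delta(G)\leq 4\) together with connectedness forces \(G=K_{5}\), whose clique complex is a single \(4\)-simplex and is therefore contractible (the empty wedge).

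So one may assume that \(G\) has no dominated vertex, no \(K_{5}\), and at least one triangle. By Lemma \ref{all-triangles-internal} not every triangle of \(G\) can be internal, so I can pick a non-internal triangle \(T\). If \(T\) is a clique of \(G\), then by the definition of internal some edge \(e\) of \(T\) lies in no clique other than \(T\); hence \(N[e]=T\), which is complete and strictly contains \(e\), and Theorem \ref{removing-edge} gives \(G\simeq G-e\), closing the induction. If \(T\) is not a clique, then since \(G\) has no \(K_{5}\) the triangle \(T\) is contained in some \(4\)-clique \(K=\{a,b,c,d\}\) of \(G\), and I will show that at least one edge of \(K\) is removable by the same theorem.

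The hard part is exactly this last claim: inside a \(K_{4}\) with no enveloping \(K_{5}\), there must exist an edge \(e\) with \(N[e]=K\). The proof I have in mind is a short counting argument exploiting the degree bound. Each of \(a,b,c,d\) already has three neighbors inside \(K\) and degree at most \(4\), so the total number of adjacencies between vertices of \(K\) and vertices outside \(K\) is at most \(4\). An external vertex \(x\) with \(k_{x}=|N(x)\cap K|\) satisfies \(k_{x}\leq 3\) (else \(K\cup\{x\}\) would form a \(K_{5}\)), and such an \(x\) is an external common neighbor of exactly \(\binom{k_{x}}{2}\) edges of \(K\). Maximizing \(\sum_{x}\binom{k_{x}}{2}\) subject to \(\sum_{x}k_{x}\leq 4\) and \(k_{x}\leq 3\) yields the bound \(3\), so at most three of the six edges of \(K\) have an external common neighbor, and at least three of them satisfy \(N[e]=K\), hence are removable by Theorem \ref{removing-edge}. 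The induction then closes in all cases.
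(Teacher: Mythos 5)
Your proposal is correct, and at the level of strategy it is the paper's proof: both arguments reduce \(G\) to a triangle-free graph of the same homotopy type by alternating Theorem \ref{prisner-removing-dominated} and Theorem \ref{removing-edge}, both use Lemma \ref{all-triangles-internal} to guarantee a non-internal triangle while triangles remain, and both treat \(K_{5}\) and the triangle-free endpoint identically. The one genuine divergence is how \(4\)-cliques are destroyed. The paper runs a three-way case analysis on a \(4\)-clique \(q\): a vertex of \(q\) with no outside neighbor is dominated; two vertices of \(q\) sharing an outside neighbor are forced by the degree bound to be twins, again yielding a dominated vertex; and only when the four outside neighbors are distinct does it delete an edge, noting that then \(N[e]=q\) for every edge \(e\subseteq q\). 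You replace all of this with a single counting argument: the external adjacency budget of \(K\) is at most \(4\), an external vertex with \(k_{x}\leq 3\) neighbors in \(K\) obstructs \(\binom{k_{x}}{2}\) edges, and convexity bounds the total obstruction by \(3<6\), so at least three edges satisfy \(N[e]=K\) and Theorem \ref{removing-edge} applies. Your version is more uniform (no dominated-vertex dichotomy is needed inside the \(K_{4}\) case), while the paper's split produces explicit structural information (the twin configuration) parallel to what it develops anyway in Theorem \ref{g-triangle-in-two-cliques}. Two small points to patch, each a one-liner: your justification that \(G-x\not\cong O_{3}\) after deleting a dominated vertex \(x\) is misdirected --- that \(O_{3}\) has no dominated vertex concerns \(G\), not \(G-x\); the correct reason is that if \(G-x\cong O_{3}\), the dominator \(y\), adjacent to \(x\) and \(4\)-valent in \(O_{3}\), would have degree \(5\) in \(G\). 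Also, your appeal to Lemma \ref{all-triangles-internal} silently uses that if every triangle of \(G\) were internal then \(G\) has no \(K_{4}\) (a triangle inside a \(K_{4}\) is not a clique, hence not internal), so that the ears of \(Q_{T}\) are themselves internal triangles as the lemma's hypothesis requires; similarly, after each edge deletion one should record that the endpoints drop to degree at most \(3\), so the resulting graph stays connected, of maximum degree at most \(4\), and is not \(O_{3}\). These are routine verifications, and with them your induction closes.
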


\begin{proof}
Let \(G\) be a low degree graph. It will be enough to show that \(G\) is homotopy equivalent to a triangleless graph (see Example 0.7 from \cite{MR1867354}). Since the only low degree graph with \(\omega(G)\geq 5\) is the complete graph \(K_{5}\) (and \(\mathrm{Cl}(K_{5})\) is contractible, and so homotopic to an empty wedge of circumferences), we can assume that \(\omega(G)\leq 4\). We prove first that there is a graph \(G'\) such that \(\omega(G')\leq 3\) and \(G'\simeq G\). Suppose then that \(G\) contains a clique \(q\) of four vertices. If a vertex in \(q\) has no neighbor outside of \(q\), then such vertex is dominated by any of its neighbors, and so by Theorem \ref{prisner-removing-dominated} it can be removed without altering the homotopy type of \(G\). Suppose then that that all vertices of \(q\) have a neighbor outside of \(q\), but that \(x,y\in q\) share \(w\not\in q\) as a neighbor. Given that the degree of vertices in \(G\) is at most four, we must have that \(N[x]=N[y]=q\cup\{x\}\), and so \(x\) is dominated by \(y\) and can be removed without affecting the homotopy type of \(G\). Suppose then that all vertices of \(q\) have neighbors outside \(q\), and such four vertices are different. In that case, for any edge \(e\in q\) we have that \(N[e]=q\), and by Theorem \ref{removing-edge}, we may remove \(e\) and get a homotopy equivalent graph. In this way, we obtain a graph \(G'\simeq G\) that has no cliques of four vertices, and is low degree.

Without loss, we may assume then that the low degree graph \(G\) has \(\omega(G)\leq 3\). There must be a triangle \(T\) in \(G\) that is not internal, otherwise by Lemma \ref{all-triangles-internal}, \(G\) would be \(O_{3}\). Then one edge of \(T\) is such that the only clique that contains such edge is \(T\). It follows that we may apply Theorem \ref{removing-edge} to remove the edge and get a homotopy equivalent graph. We can thus remove inductively all triangles of \(G\), to arrive at the desired triangleless graph \(G'\simeq G\).
\end{proof}

We obtain thus the first part of the Main Theorem.

\section{A low degree graph is homotopy \(K\)-invariant}
\label{sec:org8f8ae12}

In the next two theorems, we denote a triangle with a small \(t\), since we are not claiming that the triangle is a clique.

\begin{theorem}
\label{g-triangle-in-two-cliques}
Let \(G\) be a low degree graph, and let \(t=\{x_{1},x_{2},x_{3}\}\) be a triangle that is contained in two different cliques of \(G\). Then the vertices \(x_{1},x_{2},x_{3}\) are twins.
\end{theorem}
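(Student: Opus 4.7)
The plan is to exploit the degree constraint $\Delta(G)\le 4$ directly; I expect the hypotheses that $G$ is connected and $G\ne O_3$ to play no role in this particular statement.

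First, I would argue that both cliques containing $t$ must strictly contain $t$. Let $q,q'$ be the two distinct cliques with $t\subseteq q\cap q'$. If either one equalled $t$, then the maximality of that clique would prevent the other from strictly containing $t$, contradicting that the two are distinct. So I write $q=t\cup A$ and $q'=t\cup A'$ with $A,A'$ nonempty, and every vertex of $A\cup A'$ is a common neighbor of $x_1,x_2,x_3$.

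Next I would show $|q|=|q'|=4$. Suppose $|q|\ge 5$, so $|A|\ge 2$. Then each $x_i$ already has four neighbors inside $q$ (the two other vertices of $t$ together with any two vertices of $A$). Since $\Delta(G)\le 4$, the full neighborhood $N(x_i)$ is contained in $q\setminus\{x_i\}$. Consequently any clique through $t$ is contained in $q$ and, by maximality, equals $q$, contradicting $q\ne q'$. Hence $|q|,|q'|\le 4$, so in fact $|q|=|q'|=4$.

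Write $q=t\cup\{y\}$ and $q'=t\cup\{y'\}$. Since $q\ne q'$, we have $y\ne y'$. Each $x_i$ then has the four distinct neighbors $\{x_j,x_k,y,y'\}$ (where $\{i,j,k\}=\{1,2,3\}$), and the bound $\Delta(G)\le 4$ forces these to be all of $N(x_i)$. Therefore $N[x_i]=\{x_1,x_2,x_3,y,y'\}$ for every $i\in\{1,2,3\}$, a set independent of $i$, so $x_1,x_2,x_3$ are pairwise twins.

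I do not anticipate a serious obstacle: the statement is essentially a pigeonhole count on neighborhoods. The only subtlety is carefully ruling out the degenerate cases $q=t$ and $|q|\ge 5$, both handled by invoking the maximality of a clique.
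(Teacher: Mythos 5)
Your proof is correct and takes essentially the same approach as the paper: the paper's (terser) proof likewise observes that each of the two cliques must have the form \(t\cup\{x'_i\}\) for a single extra vertex, whence the degree bound forces \(N[x_1]=N[x_2]=N[x_3]=t\cup\{x'_1,x'_2\}\). Your version simply makes explicit the degenerate cases \(q=t\) and \(|q|\geq 5\) that the paper leaves implicit, and you are right that connectivity and \(G\neq O_3\) are not needed here.
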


\begin{proof}
Suppose that \(t\subseteq q_{1}\cap q_{2}\), where \(q_{1}, q_{2}\) are two different cliques of \(G\). Then there are \(x'_{1},x'_{2}\in G\) such that \(q_{i}=t\cup\{x'_{i}\}\) for \(i=1,2\). Then we must have \(N[x_{1}]=N[x_{2}]=N[x_{3}]=t\cup\{x'_{1},x'_{2}\}\), and so our claim follows.
\end{proof}

\begin{theorem}
\label{low-degree-homotopy-invariant}
Let \(G\) be a low degree graph. Then \(K(G)\simeq G\).
\end{theorem}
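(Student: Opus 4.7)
The strategy is to reduce $G$ to a graph $G'$ to which Theorem \ref{triangle-in-unique-clique} applies directly, while controlling the effect on the clique graph via the $\xrightarrow{\#}$ relation. The key bridge is Theorem \ref{g-triangle-in-two-cliques}, which says that any ``bad'' triangle (one contained in two cliques) forces twin vertices in $G$; removing one member of each such twin pair should erase the obstruction.

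Concretely, I would iteratively remove twin vertices. Whenever $N[x]=N[y]$ with $x\ne y$, the vertex $x$ is dominated by $y$, so we may set $G^{(1)}=G-x$ and observe that $G\xrightarrow{\#} G^{(1)}$. I would then check that $G^{(1)}$ is again a low degree graph: $\Delta(G^{(1)})\le 4$ is immediate, $O_{3}$-freeness is inherited by any induced subgraph of an $O_{3}$-free graph (and an $O_{3}$-free graph cannot equal $O_{3}$ itself), and connectedness is preserved because any neighbor of the removed vertex $x$ lies in $N[y]$ and can be reached through $y$. Iterating until no twin pairs remain produces a twinless low degree graph $G'$ with $G\xrightarrow{\#} G'$.

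Now comes the key step: every triangle of $G'$ lies in a unique clique of $G'$. For if a triangle $t'$ of $G'$ were contained in two different cliques of $G'$, then Theorem \ref{g-triangle-in-two-cliques} applied to the low degree graph $G'$ would force the three vertices of $t'$ to be pairwise twins in $G'$, contradicting the construction of $G'$. Since $G'$ is also $O_{3}$-free, Theorem \ref{triangle-in-unique-clique} applies and gives $G'\simeq K(G')$. The relation $G\xrightarrow{\#} G'$ yields $G\simeq G'$, and Theorem \ref{hash-arrow-in-cliques} propagates it to $K(G)\xrightarrow{\#} K(G')$, so that $K(G)\simeq K(G')$. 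Chaining these, $G\simeq G'\simeq K(G')\simeq K(G)$. The main obstacle I anticipate is not the structure of the argument but the careful bookkeeping needed to confirm that the twin-removal reduction preserves the low-degree hypothesis at each step, so that Theorem \ref{g-triangle-in-two-cliques} may legitimately be invoked on $G'$ to deduce the uniqueness of the clique containing each triangle.
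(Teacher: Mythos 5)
Your proposal is correct and takes essentially the same route as the paper: reduce \(G\) by deleting twin vertices supplied by Theorem \ref{g-triangle-in-two-cliques}, apply Theorem \ref{triangle-in-unique-clique} to the reduced graph, and transfer the equivalence back through Theorem \ref{hash-arrow-in-cliques}. The only cosmetic differences are that the paper removes two vertices of each offending triangle in a single pass and verifies the unique-clique property by pulling triangles back to \(G\) via Lemma \ref{different-cliques-subgraph} (so it never needs the reduced graph to be low degree), whereas you iterate twin removal and invoke Theorem \ref{g-triangle-in-two-cliques} on the reduced graph directly, which makes your low-degree preservation check necessary; also, since \(\xrightarrow{{\#}}\) is not obviously transitive, you should compose the homotopy equivalences \(G^{(i)}\simeq G^{(i+1)}\) and \(K(G^{(i)})\simeq K(G^{(i+1)})\) step by step rather than asserting a single relation \(G\xrightarrow{{\#}}G'\).
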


\begin{proof}
Let \(H\) be the graph obtained from \(G\) by removing vertices as follows: For each triangle \(t\) contained in two different cliques of \(G\), remove exactly two vertices of \(t\). By Lemma \ref{g-triangle-in-two-cliques}, we have that \(G\xrightarrow{{\#}} H\). We claim that \(H\) satisfies the hypothesis of Theorem \ref{triangle-in-unique-clique}. First, since \(H\) is an induced subgraph of \(G\), we obtain that \(H\) is \(O_{3}\)-free. Now, suppose that \(t=\{x_{1},x_{2},x_{3}\}\) is a triangle in \(H\) contained in the cliques \(q_{1},q_{2}\) of \(H\), with \(q_{1}\ne q_{2}\). Then by Lemma \ref{different-cliques-subgraph}, we have that \(t\) is a triangle in \(G\) contained in different cliques of \(G\), and so, \(x_{1},x_{2},x_{3}\) are twin vertices in \(G\). By our construction of the graph \(H\), it is not possible that \(t\subseteq H\). Hence Theorem \ref{triangle-in-unique-clique} applies, and so \(K(H)\simeq H\). By Theorem \ref{hash-arrow-in-cliques}, we get that \(K(G)\xrightarrow{{\#}} K(H)\), and so \(K(G)\simeq K(H)\simeq H\simeq G\).
\end{proof}

\section{The clique graph of a low degree graph is homotopy \(K\)-invariant}
\label{sec:org990e880}

For \(G\) a low degree graph, in this section we will prove that there is a graph obtained from \(K(G)\) removing dominated vertices that satisfies the hypothesis of Theorem \ref{triangle-in-unique-clique}. We will consider the implications of having a triangle in \(K(G)\) contained in two cliques of \(K(G)\).

\begin{lemma}
\label{triangle-in-two-stars}
Let \(G\) be a low degree graph. Let \(\{q_{1},q_{2},q_{3}\}\) be a triangle in \(K(G)\) that is contained in \(x^{ * }\cap y^{ * }\) for \(x,y\in G\). Then \(x^{ * }= y^{ * }\).
\end{lemma}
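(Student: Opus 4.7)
The plan is to observe that the hypothesis immediately forces the edge $\{x,y\}$ (the case $x=y$ is trivial) to lie in at least three distinct cliques of $G$: indeed, saying $\{q_1,q_2,q_3\}\subseteq x^{*}\cap y^{*}$ means $x,y\in q_i$ for $i=1,2,3$, and since $x,y\in q_1$ they are adjacent. Every clique of $G$ containing the edge $\{x,y\}$ has the form $\{x,y\}\cup c$ where $c$ is a maximal complete subgraph of $G[W]$ with $W:=N(x)\cap N(y)$. So the cliques of $G$ through $\{x,y\}$ are in bijection with the cliques of $G[W]$, and by hypothesis $G[W]$ has at least three cliques.

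Next I would exploit the degree bound. Since $y\in N(x)$ and $\Delta(G)\leq 4$, we have $|W|\leq 3$. A short case analysis on graphs of order at most three shows that the only one with three or more maximal completes is the edgeless graph on three vertices. Hence $W=\{a,b,c\}$ is independent and the three cliques through $\{x,y\}$ must be exactly $\{x,y,a\}$, $\{x,y,b\}$, $\{x,y,c\}$. Since then $N(x)\supseteq\{y,a,b,c\}$, the degree bound forces $N[x]=\{x,y,a,b,c\}$, and symmetrically $N[y]=\{x,y,a,b,c\}$.

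To finish, I would describe explicitly the subgraph induced on $N[x]$: the vertices $x,y$ are adjacent to each other and to each of $a,b,c$, while $a,b,c$ are pairwise non-adjacent. Hence $G[N[x]]$ is the book consisting of the three triangles $\{x,y,a\}$, $\{x,y,b\}$, $\{x,y,c\}$ glued along the edge $\{x,y\}$, whose only maximal completes are those three triangles. Since any clique of $G$ containing $x$ must be a complete subgraph of $G[N[x]]$, each such clique is one of $\{x,y,a\}$, $\{x,y,b\}$, $\{x,y,c\}$, and therefore contains $y$; so $x^{*}\subseteq y^{*}$. The symmetric argument gives $y^{*}\subseteq x^{*}$, and the lemma follows.

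The argument is essentially a direct count once one bounds $|N(x)\cap N(y)|$, and there is no real obstacle; the only point that requires some care is the small case analysis verifying that a graph on at most three vertices carries at least three cliques only when it is the independent set on three vertices.
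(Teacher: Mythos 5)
Your proof is correct, but it takes a genuinely different route from the paper's. The paper runs everything through its selection machinery (Lemma \ref{clique-in-union}): it first rules out that any \(q_{i}\) is contained in the union of the other two (such a containment would force a complete on five vertices through \(x\), collapsing \(x^{*}\) to a single clique), then takes a selection \((x_{1},x_{2},x_{3})\) to deduce \(N[x]=\{x,y,x_{1},x_{2},x_{3}\}\) and \(q_{i}=\{x,y,x_{i}\}\), and finally invokes Lemma \ref{clique-in-union} a second time to show \(x^{*}=\{q_{1},q_{2},q_{3}\}=y^{*}\). You instead use the standard correspondence between cliques of \(G\) containing the edge \(\{x,y\}\) and maximal complete subgraphs of \(G[W]\), \(W=N(x)\cap N(y)\); since \(\Delta(G)\leq 4\) gives \(|W|\leq 3\), your finite check correctly shows that only the independent triple \(\{a,b,c\}\) admits three maximal completes, whence \(N[x]=N[y]=\{x,y,a,b,c\}\) induces the book of three triangles glued along \(\{x,y\}\), and \(x^{*}=y^{*}=\{q_{1},q_{2},q_{3}\}\). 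Both arguments land on the same extremal configuration, but yours is self-contained: it avoids Lemma \ref{clique-in-union} entirely (which the paper reuses elsewhere, e.g.\ in Lemma \ref{kg-no-induced-octahedra}, so the authors get economy from it), and it makes explicit that only the bound \(\Delta(G)\leq 4\) is needed, with the \(K_{5}\) case excluded automatically since then \(G[W]\) has a single maximal complete. The one step you pass over quickly --- that a clique of \(G\) containing \(x\) and lying inside one of the three triangles must equal that triangle --- is immediate from maximality, since each triangle is itself a clique of \(G\) by hypothesis, so there is no gap.
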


\begin{proof}
In this case, we have that \(\{x,y\}\in q_{1}\cap q_{2}\cap q_{3}\). Suppose we had \(q_{1}\subseteq q_{2}\cup q_{3}\). By Lemma \ref{clique-in-union}, there is a selection \((z_{2}, z_{3})\) from \((q_{2},q_{3})\) in \(q_{1}\), that is, there are vertices \(z_{2}\in q_{1}\cap q_{2}\) with \(z_{2}\not\in q_{3}\) and \(z_{3}\in q_{1}\cap q_{3}\) with \(z_{3}\not\in q_{2}\). Let \(w\in q_{2}\) with \(w\not\in q_{1}\). Then \(x,y,z_{2},z_{3},w\) are five distinct vertices, hence \(N(x)=\{y,z_{2},z_{3},w\}\). Let \(w'\in q_{3}\) with \(w'\not\in q_{1}\). We have that \(w'\in N(x)\), but since \(w'\not\in q_{1}\) we must have \(w'=w\). It would follow then that \(\{x,y,z_{2},z_{3},w=w'\}\) is a complete of five vertices in \(G\), which is a contradiction. It follows that among \(q_{1},q_{2},q_{3}\), none of them is contained in the union of the other two. Let \((x_{1},x_{2},x_{3})\) be a selection from \((q_{1},q_{2},q_{3})\). Then \(\{x,y,x_{1},x_{2},x_{3}\}\) are five distinct vertices, and so, this set is equal to \(N[x]\). It follows also that \(q_{1} = \{x,y,x_{1}\}\), since if there was a vertex \(x'\in q_{1}-\{x,y,x_{1}\}\), then \(x'\in N[x]\) but neither \(x_{2}\) nor \(x_{3}\) are elements of \(q_{1}\). We obtain also that \(q_{2} = \{x,y,x_{2}\}\) and that \(q_{3} = \{x,y,x_{3}\}\).

We claim that \(x^{ * } = \{q_{1},q_{2},q_{3}\}\). Suppose not, and let \(q\in x^{ * }\) with \(q\not\in\{q_{1},q_{2},q_{3}\}\). By Lemma \ref{clique-in-union}, since \(q\subseteq N[x]=q_{1}\cup q_{2}\cup q_{3}\), we would obtain that \(\{x_{1},x_{2},x_{3}\}\) is a complete, which is a contradiction. Hence \(x^{ * }=\{q_{1},q_{2},q_{3}\}\). Similarly, we can prove \(\{q_{1},q_{2},q_{3}\}\) is also equal to \(y ^{*}\).
\end{proof}

\begin{lemma}
\label{triangle-in-two-neckties}
Let \(G\) be a low degree graph.  Let \(\{q_{1},q_{2},q_{3}\}\) be a triangle in \(K(G)\) that is contained in \(Q_{T_{1}}\cap Q_{T_{2}}\) where \(T_{1},T_{2}\) are internal triangles of \(G\). Then \(T_{1}=T_{2}\).
\end{lemma}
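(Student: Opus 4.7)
The plan is to argue by contradiction. Assume $T_1 \ne T_2$. By Theorem \ref{facts-from-previous-paper}(\ref{facts-necktie-is-centered}), each $Q_{T_i}$ has exactly four elements, namely the center $T_i$ together with three ears corresponding to the three edges of $T_i$. The hypothesis gives $\{q_1, q_2, q_3\} \subseteq Q_{T_1} \cap Q_{T_2}$, so $|Q_{T_1} \cap Q_{T_2}| \geq 3$; in particular $Q_{T_1}$ and $Q_{T_2}$ share elements, so they are adjacent as vertices of $K^2(G)$. Applying Theorem \ref{facts-from-previous-paper}(\ref{facts-intersecting-neckties}) splits the analysis into the two subcases $|T_1 \cap T_2| = 1$ and $|T_1 \cap T_2| = 2$; in each, I will enumerate the ears explicitly and show $|Q_{T_1} \cap Q_{T_2}| \leq 2$, a contradiction.

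In the case $|T_1 \cap T_2| = 1$, Figure \ref{intersection-in-one} sits inside $G$. With the labels of that figure, $T_1 = \{x, a, b\}$ and $T_2 = \{x, c, d\}$. The vertex $x$ already has four neighbors $\{a, b, c, d\}$ in the figure, so by $\Delta(G) \leq 4$ this is $N(x)$ in $G$; similarly $a, b, c, d$ each have their full neighborhoods determined by the figure. From this I read off $Q_{T_1} = \{T_1, \{x,a,c\}, \{x,b,d\}, \{a,b,e\}\}$ and $Q_{T_2} = \{T_2, \{x,a,c\}, \{x,b,d\}, \{c,d,f\}\}$. Since $|T_1 \cap T_2| = 1 < 2$ we have $T_1 \notin Q_{T_2}$ and $T_2 \notin Q_{T_1}$, and the remaining ears $\{a,b,e\}$ and $\{c,d,f\}$ are disjoint from the other center by inspection. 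Hence $Q_{T_1} \cap Q_{T_2} = \{\{x,a,c\}, \{x,b,d\}\}$, which has only two elements.

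In the case $|T_1 \cap T_2| = 2$, Figure \ref{intersection-in-two} sits inside $G$, with $T_1 = \{x, y, u\}$ and $T_2 = \{x, y, u'\}$ (so $u \ne u'$ precisely because $T_1 \ne T_2$). Again the vertices $x, y, u, u'$ have degree four in the figure, so their neighborhoods in $G$ are exactly what is shown; in particular $u \not\sim u'$. The ears of $T_1$ are then $T_2, \{x, u, x'\}, \{y, u, y'\}$, and the ears of $T_2$ are $T_1, \{x, u', x'\}, \{y, u', y'\}$. Since $u \ne u'$, the non-center ears of $Q_{T_1}$ and $Q_{T_2}$ are pairwise distinct, so $Q_{T_1} \cap Q_{T_2} = \{T_1, T_2\}$, again of size two. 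Both cases contradict $|Q_{T_1} \cap Q_{T_2}| \geq 3$, so $T_1 = T_2$.

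The main obstacle will be the bookkeeping in each subcase: one must verify that the displayed triangles truly are \emph{cliques} of $G$ (not proper subsets of larger completes) and that the three listed ears of each $Q_{T_i}$ are in fact all of them. Both facts follow from the rigidity imposed by $\Delta(G) \leq 4$: every vertex appearing with four neighbors in the figure has those same four neighbors in $G$, which forces the common-neighbor sets, and hence the clique structure around each edge of $T_1$ and $T_2$, to match what Theorem \ref{facts-from-previous-paper}(\ref{facts-necktie-is-centered}) already guarantees.
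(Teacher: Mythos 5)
Your proposal is correct and follows essentially the same route as the paper: use Theorem \ref{facts-from-previous-paper}(\ref{facts-intersecting-neckties}) to reduce to the two configurations of Figures \ref{intersection-in-one} and \ref{intersection-in-two}, then compute that \(Q_{T_{1}}\cap Q_{T_{2}}\) has only two elements in each case, contradicting the assumption that it contains a triangle. Your added bookkeeping (using \(\Delta(G)\leq 4\) to pin down the neighborhoods of the degree-four vertices in each figure, hence the exact ears of each necktie) is a sound and welcome elaboration of a computation the paper merely asserts.
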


\begin{proof}
Assume that the hypothesis of the lemma are satisfied with \(T_{1}\ne T_{2}\). Then \(Q_{T_{1}}\sim Q_{T_{2}}\) in \(K^{2}(G)\), and so \(T_{1}\cap T_{2}\ne\emptyset\) by Theorem \ref{facts-from-previous-paper}. If \(|T_{1}\cap T_{2}|=1\), we are in the situation of Figure \ref{intersection-in-one}. However, in that case \(Q_{T_{1}}\cap Q_{T_{2}}\) consists only of the triangles \(\{x,a,c\},\{x,b,d\}\). If \(|T_{1}\cap T_{2}|=2\), we are in the situation of Figure \ref{intersection-in-two}, and in that case, \(Q_{T_{1}}\cap Q_{T_{2}}=\{T_{1},T_{2}\}\). Hence, we must have \(T_{1}=T_{2}\).
\end{proof}

\begin{lemma}
\label{kg-no-induced-octahedra}
If \(G\) is a low degree graph, then \(K(G)\) is \(O_{3}\)-free.
\end{lemma}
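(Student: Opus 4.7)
I would argue by contradiction. Assume $K(G)$ contains an induced octahedron on six cliques $q_1, q_1', q_2, q_2', q_3, q_3'$ of $G$, with the three non-edges being the antipodal pairs $\{q_i, q_i'\}$. Since $q_i\cap q_i'=\emptyset$, every vertex of $G$ lies in at most one of $q_i, q_i'$, hence in at most three of the six cliques. Each of the eight triangles of the octahedron is a complete in $K(G)$, hence is contained in a maximal complete of $K(G)$, that is, in a vertex of $K^2(G)$; by Theorem \ref{facts-from-previous-paper}(1) this vertex is either a star $x^*$ or a necktie $Q_Z$ for an internal triangle $Z$ of $G$. No single star or necktie can contain two of the eight triangles: the union of two such triangles has at least four vertices of the octahedron, and by the pigeonhole principle some antipodal pair $\{q_i,q_i'\}$ would then sit in a common clique of $K(G)$, contradicting $q_i\cap q_i'=\emptyset$. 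Thus the eight triangles correspond to eight distinct vertices of $K^2(G)$.

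Next I focus on $q_1$. It lies in exactly four of the eight triangles, hence in at least four distinct cliques of $K(G)$. The cliques of $K(G)$ containing $q_1$ are stars $x^*$ with $x\in q_1$ and neckties $Q_Z$ with $q_1\in Q_Z$; since every element of a necktie is a triangle by Theorem \ref{facts-from-previous-paper}(1), if $|q_1|\le 2$ no necktie contains $q_1$ and there are at most $|q_1|\le 2$ stars, contradicting the four required. Hence $|q_1|\ge 3$, and by symmetry $|q_i|\ge 3$ for every $i$.

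The main case is $|q_1|=3$: at most three of the four containing cliques are stars, so at least one is a necktie $Q_Z$, and $q_1$ is either the centre $Z$ or one of its three ears. In each sub-case I would read off the induced subgraph of $G$ on the union of the triangles involved, then apply Lemmas \ref{facts-intersection-with-internal} and \ref{all-triangles-internal} to pin down the local structure, and finally invoke Lemma \ref{twin-internal-triangles} to force two internal triangles to coincide (collapsing two of the eight distinct vertices of $K^2(G)$) or Lemma \ref{clique-touching-three-triangles} together with Theorem \ref{facts-from-previous-paper}(2) to produce an induced $O_3$ in $G$, which is excluded since $G$ is low degree. For $|q_1|\ge 4$ the bound $\Delta(G)\le 4$ tightly constrains the neighborhoods of vertices of $q_1$, and a similar case analysis (together with the $|q_i|\ge 3$ lower bounds for the other cliques) shows that no such configuration is realisable; in particular $|q_1|=5$ forces $G\cong K_5$, whose clique graph is a single vertex.

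The principal obstacle is the $|q_1|=3$ analysis: the two possible roles of $q_1$ inside its containing necktie, multiplied by the placements of $q_2$ and $q_3$ in the three remaining slots of $Q_Z$, give a handful of configurations, each of which must be reduced via the internal-triangle lemmas to one of the forbidden patterns.
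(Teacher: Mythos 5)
Your preliminary reductions are correct: any two distinct faces of the octahedron jointly contain an antipodal pair $\{q_i,q_i'\}$, so no single clique of $K(G)$ (star or necktie) can contain two faces, and the resulting count of cliques of $K(G)$ through $q_1$ does force $|q_i|\geq 3$ for all $i$. But the proof stops exactly where the work begins. The case $|q_1|=3$ is presented only as a plan (``I would read off \ldots apply \ldots invoke \ldots''): not one of the ``handful of configurations'' is exhibited, and nothing shows the named lemmas actually close them. Indeed one of the tools is misstated: Lemma \ref{twin-internal-triangles} does not force two internal triangles to coincide; it says that two edge-sharing internal triangles that are \emph{not} twins in $K(G)$ force $G\cong O_3$, and twins remain distinct vertices of $K(G)$, so no ``collapsing'' of the eight vertices of $K^2(G)$ follows from it. The case $|q_1|=4$ is likewise nontrivial and not dismissed by your counting: a $4$-clique admits four stars $x^*$, $x\in q_1$, which is exactly enough to house the four faces through $q_1$, so ruling this out requires a genuine argument about outside neighbors under $\Delta(G)\leq 4$, which you do not give. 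As submitted, this is a framework plus a promissory note, i.e.\ a genuine gap at the central case analysis.

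For comparison, the paper's proof avoids your classification of the maximal completes of $K(G)$ altogether and is much more direct. It first disposes of the case where every face-triple of cliques has a common vertex: the witness $x_{123}\in q_1\cap q_2\cap q_3$ would then have six neighbors among the eight witnesses, violating $\Delta(G)\leq 4$. Otherwise it fixes a face with $q_1\cap q_2\cap q_3=\emptyset$, shows no $q_i$ lies in the union of the other two, and uses a selection $(y_1,y_2,y_3)$ together with $N(x_{12})=\{y_1,y_2,x_{13},x_{23}\}$ to conclude that $T=\{x_{12},x_{13},x_{23}\}$ is an internal triangle whose necktie $Q_T$ has $q_1,q_2,q_3$ as its three ears, pinning down $q_i$ as explicit triangles. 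The antipodal cliques $q_1',q_2',q_3'$ then force $y_1,y_2,y_3$ to be pairwise adjacent (via Lemma \ref{clique-touching-three-triangles} when $x_{jk}\in q_i'$, and directly otherwise), so $\{x_{12},x_{13},x_{23},y_1,y_2,y_3\}$ induces an octahedron in $G$, which is impossible for a low degree graph. That computation of the ears is essentially what your $|q_1|=3$ case would need; without it, or an equivalent, your argument is incomplete.
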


\begin{proof}
Let \(H\subseteq K(G)\) with \(H=\{q_{1},q_{2},q_{3},q'_{1},q'_{2},q'_{3}\}\) such that its complement \(\overline{H}\) consists exactly of the three edges \(\{q_{i},q'_{i}\}\) for \(i=1,2,3\). Then \(H\cong O_{3}\). As a first case, suppose that the cliques corresponding to the vertices of each of the eight faces of the octahedron have non-empty intersection. Then, in \(G\) we would have vertices: \(x_{123}\in q_{1}\cap q_{2}\cap q_{3}\), \(x_{123'}\in q_{1}\cap q_{2}\cap q'_{3},\ldots,\) \(x_{1'2'3'}\in q'_{1}\cap q'_{2}\cap q'_{3}\).  But then \(x_{123}\) would have six neighbors among these eight vertices, which is impossible. Hence, we may assume without loss that \(q_{1}\cap q_{2}\cap q_{3}=\emptyset\). Let \(x_{12}\in q_{1}\cap q_{2}\), \(x_{13}\in q_{1}\cap q_{3}\) and \(x_{23}\in q_{2}\cap q_{3}\). It is not possible that \(q_{1}\subseteq q_{2}\cup q_{3}\) because otherwise \(q_{1}\cup\{x_{23}\}\) would be a complete graph properly containing a clique. Let \((y_{1},y_{2},y_{3})\) be a selection from \((q_{1},q_{2},q_{3})\). Then \(N(x_{12})=\{y_{1},y_{2},x_{13},x_{23}\}\), and so, we have that \(x_{12}\not\sim y_{3}\). Similarly, \(x_{13}\not\sim y_{2}\) and \(x_{23}\not\sim y_{1}\). It also follows that \(T\) is a clique, and an internal triangle. Then \(\{q_{1},q_{2},q_{3}\}\) are the ears of the necktie \(Q_{T}\), and so they must be triangles: \(q_{1}=\{x_{12},x_{13},y_{1}\}\), \(q_{2}=\{x_{12},x_{23},y_{2}\}\) and \(q_{3}=\{x_{13},x_{23},y_{3}\}\).

On the other hand, \(q_{1}'\cap q_{2}\ne \emptyset\) and \(q_{1}'\cap q_{3}\ne\emptyset\). If \(x_{23}\in q_{1}'\), then by Lemma \ref{clique-touching-three-triangles}, we have that \(y_{2}\sim y_{3}\). If \(x_{23}\not\in q_{1}'\), then \(q_{1}'\cap q_{2} = \{y_{2}\}\) and \(q_{1}'\cap q_{3} = \{y_{3}\}\), and this also means \(y_{2}\sim y_{3}\). Similarly, our hypothesis on \(q_{2}'\) imply that \(y_{1}\sim y_{3}\) and our hypothesis on \(q_{3'}\) imply that \(y_{1}\sim y_{2}\). But then \(\{x_{12},x_{13},x_{23},y_{1},y_{2},y_{3}\}\) induce an octahedron in \(G\). This contradiction proves our claim.
\end{proof}

\begin{lemma}
\label{dominated-vertices-in-kg}
Let \(G\) be a low degree graph. Then:
\begin{enumerate}
\item If \(T\) is an internal triangle that does not share an edge with another internal triangle, then \(T\in K(G)\) is dominated in \(K(G)\) by one of its ears.
\item If \(T_{1}\) is an internal triangle that shares an edge with the internal triangle \(T_{2}\ne T_{1}\), then \(T_{1},T_{2}\) are twin vertices in \(K(G)\).
\end{enumerate}
\end{lemma}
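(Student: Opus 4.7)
For part (2), I would simply apply Lemma \ref{twin-internal-triangles}: since $G$ is low degree, it is not isomorphic to $O_{3}$, so the contrapositive of that lemma yields that $T_{1}$ and $T_{2}$ are twins in $K(G)$.

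For part (1), my plan is to fix the local structure around $T$ and then carry out a short case analysis on the adjacencies between the outer vertices of its three ears. Writing $T=\{x,y,z\}$ with ears $T_{1}=\{x,y,a\}$, $T_{2}=\{y,z,b\}$, $T_{3}=\{x,z,c\}$ as furnished by Theorem \ref{facts-from-previous-paper}(\ref{facts-necktie-is-centered}), the six vertices $x,y,z,a,b,c$ are pairwise distinct, since any coincidence would enlarge $T$ into a complete on four vertices, contradicting its maximality. The bound $\Delta(G)\leq 4$ then forces the equalities $N(x)=\{y,z,a,c\}$, $N(y)=\{x,z,a,b\}$ and $N(z)=\{x,y,b,c\}$. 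Every clique of $G$ that meets $T$ is therefore contained in $\{x,y,z,a,b,c\}$, and applying Lemma \ref{clique-touching-three-triangles} at each vertex of $T$ shows that the only cliques through $x$, $y$, or $z$ besides $T,T_{1},T_{2},T_{3}$ are the triangles $\{x,a,c\}$, $\{y,a,b\}$, $\{z,b,c\}$, each present exactly when the corresponding edge $ac$, $ab$, or $bc$ lies in $G$.

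The crux will be to rule out the presence of two or more of the edges $ab,ac,bc$. If all three are present, then $\{x,y,z,a,b,c\}$ induces $O_{3}$ in $G$; as each of these six vertices already attains degree four within this subgraph, the low degree assumption together with connectedness would force $G=O_{3}$, contradicting our hypotheses. If exactly two edges are present, then one of the ears gains additional cliques on both of its edges not shared with $T$, so that ear becomes an internal triangle sharing an edge with $T$, which violates the hypothesis of part (1).

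Once at most one extra edge survives, exhibiting a dominating ear is routine: if none does, then $N_{K(G)}[T]=\{T,T_{1},T_{2},T_{3}\}$ and any ear works, because the ears pairwise share a vertex of $T$; if exactly one does, say $ab$, the sole additional clique $\{y,a,b\}$ shares the edge $\{y,a\}$ with $T_{1}$, so $T_{1}$ is $K(G)$-adjacent to every element of $N_{K(G)}[T]$ and dominates $T$, and the other single-edge cases are symmetric. The main obstacle I anticipate is the careful use of Lemma \ref{clique-touching-three-triangles} to ensure the enumeration of cliques at each vertex of $T$ is exhaustive; once that local classification is in hand, the remaining case analysis on three potential edges among $a,b,c$ is short and essentially symmetric.
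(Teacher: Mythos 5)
Your proof is correct and takes essentially the same route as the paper: part (2) is exactly the paper's one-line appeal to Lemma \ref{twin-internal-triangles}, and part (1) runs on the same engine, namely Lemma \ref{clique-touching-three-triangles} classifying all cliques meeting \(T\), with the key contradiction that two extra edges among the outer vertices \(a,b,c\) would make an ear an internal triangle sharing an edge with \(T\). The paper merely packages this more tersely (assuming two ears fail to dominate \(T\) and deriving the two outer adjacencies directly), whereas your explicit enumeration---including the three-edge/\(O_{3}\) case, which is in fact already subsumed by your two-edge argument---is a harmless expansion of the same idea.
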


\begin{proof}
For the first part, suppose that \(T\) is an internal triangle where \(Q_{T}\) has ears \(q_{1},q_{2},q_{3}\). For \(i=1,2,3\) let \(x_{i}\in q_{i}-T\). Suppose that \(q_{1}\) does not dominate \(T\). Then there must be a clique \(q\) such that \(q\cap T\ne \emptyset\) but \(q\cap q_{1}=\emptyset\). Suppose \(q_{2}\cap q_{3}=\{x_{1}'\}\). Then \(q\subseteq N[x_{1}']\) and Lemma \ref{clique-touching-three-triangles} would imply that \(x_{2}\sim x_{3}\). Similarly, if \(q_{2}\) does not dominate \(T\) we would have \(x_{1}\sim x_{3}\). But this would imply that \(q_{3}\) is an internal triangle, against our hypothesis on \(T\).

The second statement is immediate from Lemma \ref{twin-internal-triangles}.
\end{proof}

\begin{theorem}
\label{main-theorem}
If \(G\) is a low degree graph, then \(K(G)\) is homotopy \(K\)-invariant.
\end{theorem}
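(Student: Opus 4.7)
The plan is to mirror the proof of Theorem \ref{low-degree-homotopy-invariant}: construct a subgraph \(H\) of \(K(G)\) with \(K(G)\xrightarrow{{\#}} H\) such that Theorem \ref{triangle-in-unique-clique} applies to \(H\). Once that is done, Theorem \ref{triangle-in-unique-clique} gives \(K(H)\simeq H\), Theorem \ref{hash-arrow-in-cliques} lifts \(K(G)\xrightarrow{{\#}} H\) to \(K^{2}(G)\xrightarrow{{\#}} K(H)\), and chaining these equivalences yields \(K^{2}(G)\simeq K(H)\simeq H\simeq K(G)\), which is exactly the claim that \(K(G)\) is homotopy \(K\)-invariant.

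I would take \(H\) to be \(K(G)\) with the following vertices deleted: each isolated internal triangle (one sharing no edge with another internal triangle), and one element from each pair \(\{T,T'\}\) of internal triangles that share an edge. Lemma \ref{dominated-vertices-in-kg}(1) dominates each isolated internal triangle by an ear, and Lemma \ref{all-triangles-internal} guarantees that not all three ears of such a triangle can be internal (else \(G\cong O_{3}\)), so a surviving non-internal ear is a valid dominator. For each shared-edge pair, Lemma \ref{dominated-vertices-in-kg}(2) gives the twin relation, so the removed element is dominated by the kept one; a short analysis using \(\Delta(G)\leq 4\) shows that such twin classes have at most two elements, so the construction is unambiguous. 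Thus \(K(G)\xrightarrow{{\#}} H\).

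Since \(H\) is induced in \(K(G)\), Lemma \ref{kg-no-induced-octahedra} gives that \(H\) is \(O_{3}\)-free, and the substantive point is verifying that every triangle of \(H\) lies in a unique clique of \(H\). Assume some triangle \(t\subseteq H\) lies in two cliques of \(H\); by Lemma \ref{different-cliques-subgraph} it lies in two cliques of \(K(G)\), which are stars or neckties. Lemmas \ref{triangle-in-two-stars} and \ref{triangle-in-two-neckties} rule out the \emph{two stars} and \emph{two neckties} cases, so \(t\subseteq x^{*}\cap Q_{T}\) for some normal \(x\) and internal triangle \(T\). A counting argument---at most two ears of \(Q_{T}\) can contain any given vertex, since such an ear either uses one of the two edges of \(T\) at \(x\) (when \(x\in T\)) or has \(x\) as its tip---forces \(T\in t\) and \(x\in T\), so \(t=\{T,T_{1},T_{2}\}\) with \(T_{1},T_{2}\) the two ears of \(Q_{T}\) through \(x\). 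Since \(T\in V(H)\) is internal, by construction \(T\) has an internal twin \(T'\), and we set \(\{a,b\}=T\cap T'\).

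The main obstacle is to establish \(x\in\{a,b\}\); granted this, the ear of \(Q_{T}\) through the edge \(\{a,b\}\) is one of \(T_{1},T_{2}\), and the same degree bound (three triangles through a common edge cannot all be internal) forces this ear to be \(T'\), contradicting \(T'\notin V(H)\). To prove \(x\in\{a,b\}\), I plan to show that the third vertex \(c\) of \(T=\{a,b,c\}\) is not normal. Internality of \(T\) and \(T'=\{a,b,d\}\) together with \(\Delta(G)\leq 4\) pins down \(N(c)=\{a,b,e,f\}\) (with \(\{a,c,e\}\) and \(\{b,c,f\}\) the ears of \(Q_{T}\) through \(c\)); one then checks that every clique of \(G\) containing \(c\) also contains \(a\) or \(b\), so that \(T'\) is adjacent in \(K(G)\) to every element of \(c^{*}\), making \(c^{*}\cup\{T'\}\) a complete properly larger than \(c^{*}\). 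The only possible missing clique is \(\{c,e,f\}\), arising when \(e\sim f\), but in that case the induced subgraph on \(\{a,b,c,d,e,f\}\) is isomorphic to \(O_{3}\), contradicting the hypothesis that \(G\) is a low degree graph.
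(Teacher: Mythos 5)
Your proposal is correct, and it matches the paper's skeleton almost everywhere: the same graph \(H\) (delete internal triangles sharing no edge with another internal triangle, and one member of each edge-sharing pair), the same use of Lemmas \ref{dominated-vertices-in-kg}, \ref{kg-no-induced-octahedra}, \ref{different-cliques-subgraph}, \ref{triangle-in-two-stars} and \ref{triangle-in-two-neckties} to reduce to a triangle \(t\subseteq x^{*}\cap Q_{T}\) with \(x\) normal, and the same counting forcing \(t=\{T,T_{1},T_{2}\}\) with \(x\in T\). Where you genuinely diverge is the endgame. The paper notes that since \(T\in H\) some ear of \(Q_{T}\) is internal, rules out the two ears through \(x\) (they lie in \(t\subseteq H\), so the twin rule would have deleted one), concludes the opposite ear is internal, and then uses Lemma \ref{clique-touching-three-triangles} together with two applications of Lemma \ref{facts-intersection-with-internal} to assemble an induced octahedron, contradicting that \(G\) is low degree. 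You instead take the deleted internal partner \(T'=\{a,b,d\}\) of \(T=\{a,b,c\}\) and show the apex \(c\) is not normal: your deferred computations check out (\(c\not\sim d\), \(d\sim e,f\), \(a\not\sim f\), \(b\not\sim e\) all follow from internality and \(\Delta(G)\leq 4\); unless \(e\sim f\), which makes \(\{a,b,c,d,e,f\}\) induce \(O_{3}\), every clique through \(c\) meets \(\{a,b\}\), so \(c^{*}\cup\{T'\}\) is a larger complete), whence \(x\in\{a,b\}\); since the only cliques through \(\{a,b\}\) are \(T\) and \(T'\), the ear of \(Q_{T}\) through \(\{a,b\}\) is \(T'\) itself, so \(T'\in t\subseteq H\), contradicting its deletion. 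Both routes work; yours localizes the contradiction directly against the construction of \(H\) and needs only one octahedron case, while the paper's outsources the adjacency work to its ready-made lemmas. A side benefit of your write-up is the explicit check that an internal triangle shares an edge with at most one other internal triangle (twin classes of size at most two), which the paper's removal rule tacitly assumes; that check is correct, and it reuses the same \(e\sim f\Rightarrow O_{3}\) computation. One small inaccuracy: for an isolated internal triangle you do not need Lemma \ref{all-triangles-internal} to find a surviving dominator, since every ear of such a triangle shares an edge with it and hence cannot itself be internal, so all ears survive automatically.
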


\begin{proof}
Let \(H\) be the subgraph obtained from \(K(G)\) removing vertices that are internal triangles of \(G\) according to the following rules:
\begin{enumerate}
\item Remove internal triangles that share no edge with another internal triangle.
\item \label{one-twin-removed} If an internal triangle shares an edge with another internal triangle, remove exactly one of them.
\end{enumerate}

In this way, by Lemma \ref{dominated-vertices-in-kg}, we obtain a graph \(H\) such that \(K(G)\xrightarrow{{\#}} H\) (so that \(K(G)\simeq H\)). We will prove that \(H\) satisfies the hypothesis of Theorem \ref{triangle-in-unique-clique}. First, since \(H\) is an induced subgraph of \(K(G)\), by Lemma \ref{kg-no-induced-octahedra} we obtain that \(H\) is \(O_{3}\)-free.

Let \(\{q_{1},q_{2},q_{3}\}\) be a triangle in \(H\) contained in more than one clique of \(H\). By Lemma \ref{different-cliques-subgraph}, this triangle is contained in more than one clique of \(K(G)\). By Lemma \ref{triangle-in-two-stars} and Lemma \ref{triangle-in-two-neckties} we have that such cliques of \(K(G)\) are one star and one necktie. Let \(x\in G\) be a normal vertex and \(T\) an internal triangle in \(G\) be such that \(\{q_{1},q_{2},q_{3}\}\subseteq x^{ * }\cap Q_{T}\). Then, Theorem \ref{facts-from-previous-paper}, item \ref{facts-necktie-is-centered}, gives us that \(Q_{T}\) consists of four triangles. The triangle \(T\) must be an element of the set \(\{q_{1},q_{2},q_{3}\}\), in order for \(\{q_{1},q_{2},q_{3}\}\) to be contained in a star. Without loss, assume that \(T=q_{1}\), and that we have the situation of Figure \ref{diagram-hch}, so that \(q_{1}=\{x,y,z\}\), and the ears of \(Q_{T}\) are \(q_{2}, q_{3}, q_{4}\).

\begin{figure}
\centering
\begin{tikzpicture}
  [scale=1.5,
  vertex/.style={circle, fill, draw, inner sep=0pt, minimum size=4pt},
  edge/.style={thin},
  edashed/.style={dashed, very thin}]
  \newcommand{\nodecab}[3]{\node at (#1,#2) [vertex] (#3) {};\node at (#3) [above] {\(#3\)};}
  \newcommand{\nodecbe}[3]{\node at (#1,#2) [vertex] (#3) {};\node at (#3) [below] {\(#3\)};}
  \newcommand{\nodepab}[3]{\node at (#1:#2) [vertex] (#3) {};\node at (#3) [above] {\(#3\)};}
  \newcommand{\nodepbe}[3]{\node at (#1:#2) [vertex] (#3) {};\node at (#3) [below] {\(#3\)};}
  \node at (-1,0) [vertex] (a) {};
  \node at (0,0) [vertex] (c') {};
  \node at (1,0) [vertex] (b) {};
  \node at (-1/2,{sqrt(3)/2}) [vertex] (b') {};
  \node at (1/2,{sqrt(3)/2}) [vertex] (a') {};
  \node at (0,{sqrt(3)}) [vertex] (c) {};
  \draw[edge] (a) -- (c') -- (b) -- (a') -- (c) -- (b') -- (a);
  \draw[edge] (a') -- (b') -- (c') -- (a');
  \draw[edashed] (a) -- (a');
  \draw[edashed] (b) -- (b');
  \draw[edashed] (c) -- (c');
  \node at (0, {1/sqrt(3)}) [below] {\(q_{1}\)};
  \node at (0, {2/sqrt(3)}) {\(q_{2}\)};
  \node at (-1/2, {1/(2*sqrt(3))}) {\(q_{3}\)};
  \node at (1/2, {1/(2*sqrt(3))}) {\(q_{4}\)};
  \node at (-1/2,{sqrt(3)/2}) [above left=-2pt] {\(x\)};
  \node at (1/2,{sqrt(3)/2}) [above right=-2pt] {\(z\)};
  \nodecbe{1}{0}{x'}
  \nodecbe{0}{0}{y}
  \nodecbe{-1}{0}{z'}
  \nodecab{0}{{sqrt(3)}}{y'}
\end{tikzpicture}
\caption{Dashed lines indicate not adjacent vertices \label{diagram-hch}}
\end{figure}

Since \(x\) is a normal vertex, there must be a clique \(q'\) such that \(x\in q'\) and \(q'\not\in\{q_{1},q_{2},q_{3}\}\). Then Lemma \ref{clique-touching-three-triangles} implies that \(y'\sim z'\). Now, the facts that \(q_{1}\in H\) and that \(q_{1}\) is internal imply that one of \(q_{2},q_{3},q_{4}\) must be internal (otherwise, \(q_{1}\) would have been removed and would not be in \(H\)). But if \(q_{2}\) were internal, then Lemma \ref{dominated-vertices-in-kg} implies that \(q_{1}\) and \(q_{2}\) are twins, and our removal rules would imply that one of \(q_{1},q_{2}\) should not be present in \(H\). Similarly, \(q_{3}\) cannot be internal. The only possibility is that \(q_{4}\) is internal and was removed from \(G\) and so \(q_{4}\not\in H\). But if \(q_{4}\) is internal, then Lemma \ref{facts-intersection-with-internal} applied to \(q_{4}\) and \(q_{2}\) implies that \(N(z)\) is a 4-cycle, and so \(x'\sim y'\). The same Lemma applied to \(q_{4},q_{3}\) implies that \(x'\sim z'\). So, we would obtain that \(G\cong O_{3}\). This contradiction means that \(H\) satisfies the hypothesis of Theorem \ref{triangle-in-unique-clique}, and so \(K(H)\simeq H\).
From \(K(G)\xrightarrow{{\#}} H\), we obtain \(K^{2}(G)\xrightarrow{{\#}} K(H)\simeq H\simeq K(G)\). This means that \(K^{2}(G)\simeq K(G)\).
\end{proof}

\section{Conclusion}
\label{sec:org77aba30}

If \(G\) is a low degree graph, from Theorem \ref{low-degree-homotopy-invariant} we obtained that \(G\simeq K(G)\), and from Theorem \ref{main-theorem}, we obtained \(K(G)\simeq K^{2}(G)\). Since \(K^{2}(G)\) is Helly by item \ref{second-iterated-is-helly} of Theorem \ref{facts-from-previous-paper}, we have \(K^{n}(G)\simeq K^{2}(G)\) for \(n\geq 2\). This proves that \(G\) is \(K\)-homotopy permanent, as claimed by the second part of the Main Theorem.

We conclude with the following consequences of the Main Theorem:

\begin{corollary}
\label{vertex-degree-five}
Let \(G\) be a connected graph.
\begin{enumerate}
\item If \(\mathrm{Cl}(G)\) is not homotopy equivalent to a wedge of spheres, then \(G\) has a vertex of degree at least 5.
\item If \(G\) is not homotopy \(K\)-invariant, then \(G\) has a vertex of degree at least 5, unless \(G\) is the octahedral graph. \(\qed\)
\end{enumerate}
\end{corollary}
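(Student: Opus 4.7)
The plan is to derive both items by contrapositive, reading them directly off the Main Theorem. The crucial dichotomy is that a connected graph \(G\) with \(\Delta(G)\leq 4\) is either isomorphic to the octahedron \(O_{3}\) or is, by the very definition given in the introduction, a low degree graph. Both items then reduce to checking what each of these two alternatives implies.

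For item 1, I would assume \(\Delta(G)\leq 4\) and show that \(\mathrm{Cl}(G)\) is homotopy equivalent to a wedge of spheres. If \(G\cong O_{3}\), then, as recalled in the introduction, \(\mathrm{Cl}(O_{n})\) is homeomorphic to \(S^{n-1}\); hence \(\mathrm{Cl}(G)\cong S^{2}\), which is trivially a wedge of spheres. Otherwise \(G\) is a low degree graph, and Theorem \ref{wedge-of-circumferences} gives that \(G\) is homotopic to a wedge of circumferences, i.e.\ a wedge of \(1\)-spheres. Contraposition then yields the claim. For item 2, I would assume \(\Delta(G)\leq 4\) and \(G\not\cong O_{3}\), so that \(G\) is a low degree graph; Theorem \ref{low-degree-homotopy-invariant} then delivers \(G\simeq K(G)\), which is precisely homotopy \(K\)-invariance. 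A second application of the contrapositive gives the stated conclusion.

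There is no genuine obstacle here: both parts are immediate corollaries of results already proved. The only subtleties worth flagging in the write-up are (i) that ``wedge of spheres'' in item 1 must be interpreted broadly enough to cover both wedges of \(1\)-spheres (coming from low degree graphs via Theorem \ref{wedge-of-circumferences}) and the single \(2\)-sphere \(\mathrm{Cl}(O_{3})\), and (ii) that for item 2 only the one-step invariance \(G\simeq K(G)\) of Theorem \ref{low-degree-homotopy-invariant} is needed, not the full homotopy \(K\)-permanence assembled in the Conclusion.
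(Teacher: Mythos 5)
Your proof is correct and is exactly the argument the paper intends: the corollary is stated with a \(\qed\) because it follows immediately, by the dichotomy that a connected graph with \(\Delta(G)\leq 4\) is either \(O_{3}\) (with \(\mathrm{Cl}(O_{3})\cong S^{2}\)) or a low degree graph, from Theorem \ref{wedge-of-circumferences} and Theorem \ref{low-degree-homotopy-invariant}. Your two flagged subtleties (reading wedges of circumferences as wedges of \(1\)-spheres, and needing only one-step invariance for item 2) are accurate and consistent with the paper.
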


A graph \(G\) such that \(|K^{n}(G)|=1\) for some \(n\) is called \emph{null}. As mentioned in the Introduction, the problem of whether nullity implies contractibility was considered in \cite{contractibility}. Our Main Theorem allows us to prove that this is so for low degree graphs.

\begin{corollary}
\label{ldg-nulls-are-contractible}
If a low degree graph \(G\) is null, then it is contractible.
\end{corollary}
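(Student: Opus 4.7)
The plan is a one-line deduction from the Main Theorem, which at this point in the paper has already been fully established. By definition of nullity, there exists some $n \geq 0$ with $|K^n(G)| = 1$. If $n = 0$ then $G$ itself has a single vertex and is trivially contractible, so assume $n \geq 1$.

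Since $K^n(G)$ has exactly one vertex, the simplicial complex $\mathrm{Cl}(K^n(G))$ is a single point, and hence contractible. Because $G$ is a low degree graph, the second part of the Main Theorem asserts that $G$ is homotopy $K$-permanent, so in particular $G \simeq K^n(G)$. Combining these two facts, $G$ is homotopy equivalent to a point, i.e.\ $\mathrm{Cl}(G)$ is contractible.

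There is essentially no obstacle in this corollary; all the real work was already done in Theorems \ref{low-degree-homotopy-invariant} and \ref{main-theorem}, together with item \ref{second-iterated-is-helly} of Theorem \ref{facts-from-previous-paper}, which collectively supply the homotopy equivalences $G \simeq K(G) \simeq K^2(G) \simeq K^n(G)$ for all $n \geq 2$. The corollary is just the observation that a contractible target forces a contractible source under a homotopy equivalence, applied to the null hypothesis.
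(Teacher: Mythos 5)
Your proof is correct, but it takes a shorter route than the paper does. The paper never uses the fact that some \(K^{n}(G)\) is literally a one-vertex graph as its contractible anchor; instead it passes to \(K^{2}(G)\), notes that \(K^{2}(G)\) is null and Helly (Theorem \ref{facts-from-previous-paper}), invokes Theorem 2.2 of Bandelt--Prisner to conclude that a null Helly graph is dismantlable and hence contractible, and only then pulls contractibility back to \(G\) via homotopy \(K\)-permanence. You observe, rightly, that this detour is unnecessary: nullity gives \(|K^{n}(G)|=1\) for some \(n\), so \(\mathrm{Cl}(K^{n}(G))\) is a point, and \(K\)-permanence (which includes \(G\simeq K^{n}(G)\) for that \(n\), with the trivial case \(n=0\) handled separately as you do) immediately forces \(\mathrm{Cl}(G)\) to be contractible. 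Your argument is more elementary and self-contained, needing no external dismantlability result; what the paper's route buys is the stronger intermediate fact that \(K^{2}(G)\) is dismantlable (not merely contractible), which is of independent interest in the clique-graph literature but is not needed for the corollary as stated. Both arguments rest on the same essential input, the homotopy \(K\)-permanence established by the Main Theorem, so the difference is one of packaging rather than substance.
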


\begin{proof}
Let \(G\) be a null low degree graph. Then \(K^{2}(G)\) is null and Helly. Theorem 2.2 from \cite{BP91} implies that \(K^{2}(G)\) is dismantlable, hence contractible. Since \(G\) is homotopy \(K\)-permanent, \(G\) is also contractible.
\end{proof}

\section{Statement}
\label{sec:org4ab2a9b}

On behalf of all authors, the corresponding author states that there is no conflict of interest.

\bibliographystyle{plain}
\bibliography{homotopy-low}
\end{document}